\definecolor{dkgreen}{rgb}{0,0.6,0}
\definecolor{gray}{rgb}{0.5,0.5,0.5}
\definecolor{mauve}{rgb}{0.58,0,0.82}
\tiny\color{gray},
\newcommand{\red}{{\text{\rm red}}}
\newcommand{\sm}{{\text{\rm sm}}}
\newcommand{\sing}{{\text{\rm sing}}}
\newcommand{\sA}{\mathcal{A}}
\newcommand{\sG}{\mathcal{G}}
\newcommand{\sL}{\mathcal{L}}
\newcommand{\sN}{\mathcal{N}}
\newcommand{\sO}{\mathcal{O}}
\newcommand{\sY}{\mathcal{Y}}
\newcommand{\sX}{\mathcal{X}}
\newcommand{\bG}{\mathbb{G}}
\newcommand{\bZ}{\mathbb{Z}}
\newcommand{\bL}{\mathbb{L}}
\newcommand{\bQ}{\mathbb{Q}}
\newcommand{\bA}{\mathbb{A}}
\renewcommand \dim[1]{\mbox{dim}{#1}}
\newcommand\sch[1]{\mathbf{Sch}/{#1}}
\newcommand\set{\mathbf{Sets}}
\newcommand\var{\mathbf{Var}}
\newcommand\spec[1]{\operatorname{Spec}(#1)}
\newcommand\fatpoints[1]{\mathbf{Fat}/{#1}}
\newcommand\into{\hookrightarrow}
\newcommand \grot[1]{{\mathbf {Gr}(#1)}}
\def\opn#1#2{\def#1{\mathop{\kern0pt\fam0#2}\nolimits}} % to make operators
\def\underrightarrow{\mathpalette\underrightarrow@}
\def\underrightarrow@#1#2{\vtop{\ialign{$##$\cr
 \hfil#1#2\hfil\cr\noalign{\nointerlineskip}%
 #1{-}\mkern-6mu\cleaders\hbox{$#1\mkern-2mu{-}\mkern-2mu$}\hfill
 \mkern-6mu{\to}\cr}}} 
\def\underleftarrow{\mathpalette\underlefalwaystarrow@}
\def\underleftarrow@#1#2{\vtop{\ialign{$##$\cr
 \hfil#1#2\hfil\cr\noalign{\nointerlineskip}#1{\leftarrow}\mkern-6mu
 \cleaders\hbox{$#1\mkern-2mu{-}\mkern-2mu$}\hfill
 \mkern-6mu{-}\cr}}}
    \let\phi=\varphi
    \let\epsilon=\varepsilon  
\def\:{\colon}   
\let\oldtilde=\tilde
\def\tilde#1{\mathchoice{\widetilde{#1}}{\widetilde{#1}}%
{\indextil{#1}}{\oldtilde{#1}}}
\def\indextil#1{\lower2pt\hbox{$\textstyle{\oldtilde{\raise2pt%
\hbox{$\scriptstyle{#1}$}}}$}}
\def\pnt{{\raise1.1pt\hbox{$\textstyle.$}}}  
\let\amp@rs@nd@\relax
\newdimen\ex@
\newdimen\bigaw@
\newdimen\minaw@
\newdimen\minCDaw@  
\newif\ifCD@
\def\minCDarrowwidth#1{\minCDaw@#1}
\def\@CD{\def\A##1A##2A{\llap{$\vcenter{\hbox
 {$\scriptstyle##1$}}$}\Big\uparrow\rlap%
{$\vcenter{\hbox{$\scriptstyle##2$}}$}&&}%
\def\V##1V##2V{\llap{$\vcenter{\hbox
 {$\scriptstyle##1$}}$}\Big\downarrow\rlap%
{$\vcenter{\hbox{$\scriptstyle##2$}}$}&&}%
\def\={&\hskip.5em\mathrel
 {\vbox{\hrule width\minCDaw@\vskip3\ex@\hrule width
 \minCDaw@}}\hskip.5em&}%
\def\verteq{\Big\Vert&&}%
\def\noarr{&&}%
\def\vspace##1{\noalign{\vskip##1\relax}}%
\relax\iffalse{\fi\let\amp@rs@nd@&\iffalse}\fi
 \CD@true\vcenter\bgroup\relax%   
\iffalse{\fi\let\\=\cr\iffalse}\fi\tabskip\z@skip\baselineskip20\ex@
 \lineskip3\ex@\lineskiplimit3\ex@\halign\bgroup
 &\hfill$\m@th##$\hfill\cr}
\def\@endCD{\cr\egroup\egroup}
\def\>#1>#2>{\amp@rs@nd@\setbox\z@\hbox{$\scriptstyle
 \;{#1}\;\;$}\setbox\@ne\hbox{$\scriptstyle\;{#2}\;\;$}\setbox\tw@
 \hbox{$#2$}\ifCD@
 \global\bigaw@\minCDaw@\else\global\bigaw@\minaw@\fi
 \ifdim\wd\z@>\bigaw@\global\bigaw@\wd\z@\fi
 \ifdim\wd\@ne>\bigaw@\global\bigaw@\wd\@ne\fi
 \ifCD@\hskip.5em\fi
 \ifdim\wd\tw@>\z@
 \mathrel{\mathop{\hbox to\bigaw@{\rightarrowfill}}\limits^{#1}_{#2}}\else
 \mathrel{\mathop{\hbox to\bigaw@{\rightarrowfill}}\limits^{#1}}\fi
 \ifCD@\hskip.5em\fi\amp@rs@nd@}
\def\<#1<#2<{\amp@rs@nd@\setbox\z@\hbox{$\scriptstyle
 \;\;{#1}\;$}\setbox\@ne\hbox{$\scriptstyle\;\;{#2}\;$}\setbox\tw@
 \hbox{$#2$}\ifCD@
 \global\bigaw@\minCDaw@\else\global\bigaw@\minaw@\fi
 \ifdim\wd\z@>\bigaw@\global\bigaw@\wd\z@\fi
 \ifdim\wd\@ne>\bigaw@\global\bigaw@\wd\@ne\fi
 \ifCD@\hskip.5em\fi
 \ifdim\wd\tw@>\z@
 \mathrel{\mathop{\hbox to\bigaw@{\leftarrowfill}}\limits^{#1}_{#2}}\else
 \mathrel{\mathop{\hbox to\bigaw@{\leftarrowfill}}\limits^{#1}}\fi
 \ifCD@\hskip.5em\fi\amp@rs@nd@}
\def\@CDS{\def\A##1A##2A{\llap{$\vcenter{\hbox
 {$\scriptstyle##1$}}$}\Big\uparrow\rlap%
{$\vcenter{\hbox{$\scriptstyle##2$}}$}&}%
\def\V##1V##2V{\llap{$\vcenter{\hbox
 {$\scriptstyle##1$}}$}\Big\downarrow\rlap%
{$\vcenter{\hbox{$\scriptstyle##2$}}$}&}%
\def\={&\hskip.5em\mathrel
 {\vbox{\hrule width\minCDaw@\vskip3\ex@\hrule width
 \minCDaw@}}\hskip.5em&}
\def\verteq{\Big\Vert&}
\def\novarr{&}
\def\noharr{&&}
\def\SE##1E##2E{\slantedarrow(0,18)(4,-3){##1}{##2}&}
\def\SW##1W##2W{\slantedarrow(24,18)(-4,-3){##1}{##2}&}
\def\NE##1E##2E{\slantedarrow(0,0)(4,3){##1}{##2}&}
\def\NW##1W##2W{\slantedarrow(24,0)(-4,3){##1}{##2}&}
\def\slantedarrow(##1)(##2)##3##4{\thinlines\unitlength1pt%
\lower 6.5pt\hbox{\begin{picture}(24,18)%
\put(##1){\vector(##2){24}}%
\put(0,8){$\scriptstyle##3$}%
\put(20,8){$\scriptstyle##4$}%
\end{picture}}}
\def\vspace##1{\noalign{\vskip##1\relax}}\relax%
\iffalse{\fi\let\amp@rs@nd@&\iffalse}\fi
 \CD@true\vcenter\bgroup\relax\iffalse%
{\fi\let\\=\cr\iffalse}\fi\tabskip\z@skip\baselineskip20\ex@
 \lineskip3\ex@\lineskiplimit3\ex@\halign\bgroup
 &\hfill$\m@th##$\hfill\cr}
\def\@endCDS{\cr\egroup\egroup}
\theoremstyle{plain} %text of this environment is typesetted in italics
\newtheorem{theorem}{\indent\sc Theorem}[section]
\newtheorem{lemma}[theorem]{\indent\sc Lemma}
\newtheorem{corollary}[theorem]{\indent\sc Corollary}
\newtheorem{proposition}[theorem]{\indent\sc Proposition}
\theoremstyle{definition} %text of this environment is typesetted in roman letters
\newtheorem{definition}[theorem]{\indent\sc Definition}
\newtheorem{remark}[theorem]{\indent\sc Remark}
\newtheorem{example}[theorem]{\indent\sc Example}
\newtheorem{question}[theorem]{\indent\sc Question}
\begin{document}

\title{Auto-arcs of complete intersection varieties.}
\author{Andrew R. Stout}
\address{Borough of Manhattan Community College, CUNY}
\curraddr{199 Chambers Street\\ New York, NY 10007}
\email{astout@bmcc.cuny.edu}
\thanks{Support for this project was provided by a PSC-CUNY Award (PSC-Grant Traditional A, \# 66024-00 54), jointly funded by the Professional Staff Congress and The City University of New York.}

%ABSTRACT

\begin{abstract}
We systematically study the so-called auto-arc spaces. Auto-arc spaces were originally introduced by Schoutens in \cite{sch2} and later generalized and studied by the author in \cite{sto2017}, \cite{auto}, and \cite{sto2019}. In that aforementioned work, only results concerning trivial deformations were explicitly considered because even in that case auto-arc spaces being a subset of generalized jet schemes are difficult to understand. The major advance in this work is obtained by considering auto arc spaces of complete intersections. It is shown that over $k[t]/(t^{n+1})$, these spaces can be viewed as global flat deformations over $\bA_k^n$ of the classical jet scheme of order $n$. We also introduce the study of so-called strong/weak deformations of curves in this context, and we show that a motivic volume can be defined in this case.  
\end{abstract}

\maketitle

\section{Introduction}

The study of jet schemes and arc spaces are an important area of algebraic geometry because these spaces encode a large amount of information above the underlying schemes singular points. 
In this paper, we focus our study on particular types of generalized jet scheme which we term auto-arc spaces. These are defined roughly to be generalized jet schemes of a flat deformation of a scheme over fat point along that same fat point.  Some first motivating examples are discussed in detail in Section \ref{sec1} pertain to the reduced tangent bundle of a scheme over the dual numbers. 

First, we will describe briefly jet schemes and then then the generalization to auto-arc spaces. The {\it jet scheme} of $X$ {of order} $n$ over a field $k$, most commonly denoted in the literature as $\sL_n(X)$, is the scheme with its induced reduced structure which represents the functor, $\sch{k} \to \set$, given by 
$$Y \mapsto \mbox{Hom}_{k}(Y\times_k \spec{k[t]/(t^{n+1}}, Y\times_k X)$$
where $\sch{k}$ denotes schemes over a field $k$.  As a consequence of a theorem due to Grothendieck,  $\sL_n(X)$ exists provided $X$ is separated and locally of finite type over $k$. 
Classically, the {\it arc space of} $X$ {\it over} $k$ in the literature is the projective limit $\sL(X) := \varprojlim \sL_n(X)$ over the natural truncation maps $\pi^n_{n-1} : \sL_n(X) \to \sL_{n-1}(X)$ induced by modding out by $(t^n)\cdot k[t]/(t^{n+1})$. This is also a scheme over $k$ since the transition maps $\pi^n_{n-1}$ are affine. 

The definition of a generalized jet scheme over $k$ is obtained by replacing the linear jet $\spec{k[t]/(t^n)}$ with an arbitrary finitely generated Artinian $k$-algebra. One advantage to this viewpoint is that for an affine $k$-scheme $X=\spec{R}$,  the $k$-points, given by an $a$, on the generalized jet scheme are in one-to-one correspondence with $k$-algebra homomorphisms $\varphi_a : R \to A$ from which an explicit description of the generalized jet scheme as an affine scheme can be obtained.  Similarly then to the classical jet scheme case, the generalized jet scheme of $X$ with respect to $Z=\spec{A}$ exists if $X$ is separated and locally of finite type over $k$. As we discuss below, we will then denote the resulting scheme as $\underline{Hom}_S(Z, X)$ to denote this scheme, and we let $\underline{Hom}_S(Z, X)^{\red}$ denote this scheme with its reduced induce structure.

We also sometimes prefer to work at the following level of generality. Let $S$ be an arbitrary scheme and let $X$ and $X'$ be $S$-schemes locally of finite presentation.  We define the functor from $\sch{S} \to \set$ defined by $$Y \mapsto \mbox{Hom}_S(X'\times_SY, X\times_SY)$$
When this functor is representable by an algebraic space (or more generally by an algebraic stack) over $S$, we write the resulting space as $\underline{Hom}_S(X', X)$. It is a well-known result due to Artin\footnote{Without adding any additional assumptions, we can let $X$ and $X'$ be merely algebraic spaces over $S$.} that the functor is representable by a separated algebraic space over $S$ provided $X'$ is proper and flat over $S$ and $X$ is $S$-separated.

Now, given any connected, finite and flat scheme $Z$ over an arbitrary scheme $S$, we consider an infinitesimal flat deformation $Y \to Z$ of an $S$-scheme $X$ (separated and locally of finite presentation over $S$), and we define the auto-arc space of $Y$ to be the algebraic space over $S$, defined by $\sA_Z(Y) := \underline{Hom}_S(Z,Y)^{\red}$ where $(-)^{\red}$ denotes the reduced structure. When $S$ is the spectrum of a field $k$, then this algebraic space will be a reduced separated scheme locally of finite type over $k$. 
In Section \ref{sec2}, we introduce the reader to some general results concerning these spaces. Notably, we borrow some well-known results concerning jet schemes of complete intersections (initially worked out by Mustata in \cite{mus2001}), and we see that many of these types of results transfer remarkably well.  In both Section \ref{sec2} and Section \ref{sec3}, it becomes apparent that the flat locus of a particular natural morphism, denoted $\theta : \sA_Z(Y) \to \sA_Z$, is of paramount importance. More explicitly, one point of this paper is that the underlying ``relativized notion" of those aforementioned results concerning complete intersections as applied to general auto-arc spaces is best viewed as a question concerning the flat locus of the morphism $\theta$. 

To see this more clearly, in Section \ref{sec3}, we restrict to the case of linear auto-arc spaces, and then in Section \ref{sec4}, we restrict further to linear auto-arc spaces of curves. In either case, we see that these spaces can be regarded as flat global deformations of the classical jet scheme. Many properties of classical jet schemes carry over directly into this context, but crucially, some very well-known results do not or become very subtle (i.e., dependent on choice of deformation). For example, it is no longer automatic that the linear auto arc space of a deformed curve will be reducible if the original curve is singular. We therefore, propose the idea to measure the so-called ``weakness" of a deformation as the ``normalized dimension" of the inverse image of the singular locus in $\sA_Z(Y)$ as a measurement of degeneracy of this behavior. In the remaining sections, we develop the idea of a motivic volume and we show that motivic milnor fiber and its corresponding zeta function have a natural interpretation in this context. 

\section{Tangent bundles of deformations over the dual numbers} \label{sec1}

We let $\mathbf{Sch}$ denote the category of schemes. Given an object $S$ in $\mathbf{Sch}$, we let $\sch{S}$ denote the slice category of schemes $X$ over $S$. Moreover, we let $\fatpoints{S}$ denote the full subcategory of $\sch{S}$ whose objects are formed by all connected schemes $Z$ such that the structure morphism $Z \to S$ is finite and flat. 

Following \cite{liu2006}, we say that a scheme $S$ is a Dedekind scheme if it is a normal locally Noetherian scheme of dimension strictly less than $2$, and we first start with the following observation. 

\begin{theorem}\label{LiuFlat}
Let $X$ be reduced scheme in $\sch{S}$ with $S$ a Dedekind scheme.  The structure morphism $j: X \to S$ is flat if and only if every irreducible component of $X$ dominates $S$.
\end{theorem}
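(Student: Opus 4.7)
The plan is to reduce flatness to a pointwise check, use the Dedekind hypothesis to force each local ring $\sO_{S,s}$ to be either a field or a DVR, and then invoke the elementary criterion that a module over a DVR is flat if and only if it is torsion-free.

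For the forward implication I would proceed by contraposition. Suppose some irreducible component $X_{0}\subset X$ does not dominate $S$, and let $\xi$ be its generic point and $s:=j(\xi)$. Then $s$ cannot be a generic point of $S$, so it is a closed point and $\sO_{S,s}$ is a DVR with uniformizer $\pi$. Since $X_{0}$ lies set-theoretically in the fiber $j^{-1}(s)$, the image of $\pi$ in $\sO_{X,\xi}$ is nilpotent, and hence zero because $X$ is reduced. But $\xi$ is a generic point of a reduced Noetherian scheme, so $\sO_{X,\xi}$ is a nonzero field; thus $\pi\neq 0$ acts as zero on it, exhibiting $\pi$-torsion and contradicting flatness of $\sO_{X,\xi}$ over $\sO_{S,s}$.

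For the converse I check flatness at an arbitrary point $x\in X$ with $s:=j(x)$. If $s$ is a generic point of $S$, then $\sO_{S,s}$ is a field (the normality in the Dedekind hypothesis plus reducedness makes this automatic), and every module is flat over a field. Otherwise $\sO_{S,s}$ is a DVR and it suffices to verify that $\sO_{X,x}$ is torsion-free as an $\sO_{S,s}$-module. Because $X$ is reduced and locally Noetherian, the natural map
$$
\sO_{X,x}\;\hookrightarrow\;\prod_{i}\kappa(\xi_{i})
$$
is injective, where $\xi_{i}$ ranges over the generic points of the finitely many irreducible components of $X$ passing through $x$. By hypothesis each $\xi_{i}$ maps to a generic point $\eta_{i}$ of $S$, so the composite $\sO_{S,s}\to\kappa(\xi_{i})$ factors through the injection $\sO_{S,s}\hookrightarrow\kappa(\eta_{i})$ (which is injective since $\sO_{S,s}$ is a domain) followed by the field extension $\kappa(\eta_{i})\hookrightarrow\kappa(\xi_{i})$. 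Hence every nonzero element of $\sO_{S,s}$ acts injectively on each factor, and therefore on $\sO_{X,x}$.

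The only subtle point is the passage from a single local ring to the product of residue fields at the components through $x$, together with the verification that reducedness promotes $\sO_{X,\xi_i}$ and $\sO_{S,\eta_i}$ to fields; these are bookkeeping steps rather than deep obstacles. There is no substantial combinatorial or cohomological difficulty, and the whole argument is essentially a global repackaging of the torsion-free criterion over a DVR.
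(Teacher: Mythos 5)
Your argument is correct. Note, though, that the paper does not actually prove this statement: its ``proof'' consists of unpacking the word ``dominates'' and then citing Proposition 3.9 on page 137 of Liu's book, so there is no internal argument to compare against. What you have written is essentially the standard proof of that cited proposition, made self-contained: the whole content is that over a discrete valuation ring flatness is equivalent to torsion-freeness, that a reduced local ring injects into the product of the residue fields at its minimal primes (which kills torsion when every component dominates), and that a component landing over a closed point forces the uniformizer to act as zero on the field $\sO_{X,\xi}$ at its generic point (which produces torsion when some component fails to dominate). That is exactly the right mechanism, and it buys the reader an actual proof rather than a pointer. Two bookkeeping remarks if you want it airtight at the stated level of generality: the statement does not assume $X$ locally Noetherian, so there may be infinitely many components through $x$; this is harmless, since the injection $\sO_{X,x}\hookrightarrow\prod_{\mathfrak p\ \mathrm{minimal}}\kappa(\mathfrak p)$ only uses that the nilradical is the intersection of all minimal primes. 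And since a Dedekind scheme need not be irreducible (its normality only forces it to be a disjoint union of integral schemes, possibly of dimension $0$), ``dominates $S$'' should be read componentwise; one reduces at once to $S$ integral, where your dichotomy ``generic point gives a field, closed point gives a DVR'' is exactly right.
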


\begin{proof}
Note that an irreducible component $Z$ of $X$ dominates $S$ if the image of $Z$ is dense in $S$ -- i.e., the set-theoretic closure $\overline{j(Z)}$ is equal to $S$. This theorem is a restatement of Proposition 3.9 on page 137 of \cite{liu2006}.
\end{proof}

There are a lot of straightforward conclusions one can reach by using Theorem \ref{LiuFlat}. For now, will just need to consider the following drastically simplified case.  

\begin{corollary} \label{1stcor}
Let $k$ be a field, $X$ be an integral scheme, and  $j: X \to \mathbb{A}_{k}^{1}$ a surjective morphism of schemes. Then, $j$ is flat. 
\end{corollary}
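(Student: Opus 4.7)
The plan is to deduce this as a direct application of Theorem \ref{LiuFlat}, so the work consists primarily of verifying that all three hypotheses are in force.

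First I would check that $\mathbb{A}_k^1 = \Spec k[t]$ qualifies as a Dedekind scheme in the sense just defined. The ring $k[t]$ is a principal ideal domain, hence an integrally closed Noetherian domain of Krull dimension $1$; this makes $\mathbb{A}_k^1$ normal, locally Noetherian, and of dimension strictly less than $2$, as required. Next I would observe that any integral scheme is automatically reduced, so $X$ satisfies the reducedness hypothesis of Theorem \ref{LiuFlat}.

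The substantive step is then to identify the irreducible components of $X$ and verify that each dominates $\mathbb{A}_k^1$. Since $X$ is integral, it is in particular irreducible, so it has a unique irreducible component, namely $X$ itself. Domination amounts to the assertion $\overline{j(X)} = \mathbb{A}_k^1$, and this is immediate from the surjectivity hypothesis: $j(X) = \mathbb{A}_k^1$ already, so its closure certainly equals $\mathbb{A}_k^1$. With all three hypotheses (Dedekind base, reduced source, dominant components) confirmed, Theorem \ref{LiuFlat} yields that $j$ is flat.

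There is no genuine obstacle here; the only potentially subtle point is to make sure one is applying the stated theorem rather than the more general criterion for flatness over a Dedekind scheme, which requires ruling out embedded components. The integrality of $X$ sidesteps that issue entirely, since an integral scheme has no embedded components and a single irreducible component to test. I would therefore present the proof as a two or three line direct invocation of Theorem \ref{LiuFlat}, with the verifications above inserted as parenthetical remarks.
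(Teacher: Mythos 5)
Your proof is correct and is exactly the argument the paper intends: the corollary is stated as a direct consequence of Theorem \ref{LiuFlat}, with $\mathbb{A}_k^1$ a Dedekind scheme, $X$ reduced and irreducible since it is integral, and its unique component dominating the base by surjectivity. The paper gives no further details, so your verification of the hypotheses is a faithful (and slightly more explicit) version of the same route.
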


%---------------------------------------------------------------------------------------------------------------------------------

For now, let $D = S \times \spec{\mathbb{Z}[t]/(t^2)}$ be the dual numbers over the scheme $S$. 
Then, $D \to S$ is finite and flat, and as such $\underline{Hom}_{S}(D, X)$ exists in $\sch{S}$ for all objects $X$ in $\sch{D}$ such that the structure morphism $j: X \to D$ is separated and locally of finite presentation\footnote{Thus, the fiber of $X$ at zero defined by $X_0 := X \times_{D_S} S$ is separated and locally of finite presentation.}.  We let $T(X)$ denote {\it the reduced tangent bundle over} $S$, which is defined by $T(X) := \underline{Hom}_{S}(D, X)^{\red}.$

\begin{lemma}\label{surj}
Let $S = \spec{A}$ with $A$ a reduced Noetherian local ring and let $D$ be the dual numbers over $S$. Assume that $X$ is separated and locally of finite type\footnote{Note that a morphism $j : X \to S$ is locally of finite presentation if and only if $S$ is locally noetherian and $j$ is locally of finite type.} over $D$. Then, $T(D) \cong \mathbb{A}_{S}^1$ and moreover, the natural morphism from $T(X)$ to $T(D)$ is surjective provided $X$ admits at least one morphism $S \to X$. 
\end{lemma}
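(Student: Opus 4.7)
The plan is to handle the two assertions of the lemma separately, using the functor of points of the representing $\uHom$ space and then passing to the reduced structure.

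For the identification $T(D) \cong \mathbb{A}^1_S$, I would compute $\uHom_S(D, D)$ functorially. For an affine $S$-scheme $Y = \Spec B$, the $Y$-points are $B$-algebra endomorphisms of $B[t]/(t^2)$, determined by $\phi(t) = u + vt$ subject to $(u + vt)^2 = 0$, equivalently $u^2 = 0$ and $2uv = 0$. This presents $\uHom_S(D, D) \cong \Spec(A[u, v]/(u^2, 2uv))$. Any element of this ring admits a unique representation $f(v) + g(v) u$; raising to the $n$th power and reducing modulo $u^2$ gives $f^n + n f^{n-1} g u$, and the $u^0$-component lying in $(u^2, 2uv)$ forces $f^n = 0$, hence $f = 0$ by reducedness of $A$. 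Thus the nilradical is exactly $(u)$, and $T(D) = \Spec A[v] \cong \mathbb{A}^1_S$; under this identification a $B$-valued point $b$ corresponds to the endomorphism $t \mapsto bt$ of $D_Y$.

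For surjectivity of the natural morphism $\theta: T(X) \to T(D)$ (which sends an $S$-morphism $D \to X$ to its composition with the structure morphism $X \to D$), I would check surjectivity on geometric points. Fix a geometric point $\bar s \to S$ with residue field $K$ and an element $b \in K$, corresponding via the identification above to the $K$-morphism $\psi: D_K \to D_K$, $t \mapsto bt$. The goal is to produce a $K$-morphism $\phi: D_K \to X_K$ whose post-composition with $X_K \to D_K$ equals $\psi$. The hypothesized $\sigma: S \to X$ base-changes to $\bar\sigma: \Spec K \to X_K$, which automatically factors through the central fiber since the composite $S \to X \to D$ is forced to be the zero section. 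I would then invoke the standard square-zero lifting paradigm: lifts of $\bar\sigma^*$ to maps $\tilde\sigma^*: \sO_{X_K, \bar\sigma} \to K[t]/(t^2)$ are in bijection with $K$-derivations $d: \sO_{X_K, \bar\sigma} \to K$, and the compatibility with $\psi$ collapses to the single linear equation $d(\alpha) = b$, where $\alpha$ denotes the image in $\sO_{X_K, \bar\sigma}$ of the $D$-coordinate $t$.

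The main obstacle is producing, for each prescribed $b \in K$, a derivation satisfying $d(\alpha) = b$. This is equivalent to showing that the cotangent class $d\alpha$ is nonzero in $\Omega^1_{X_K/K} \otimes K$ at $\bar\sigma$, for then the functional $d\alpha \mapsto b$ can be extended $K$-linearly over any complementary basis. I expect this to follow from the flat deformation structure implicit in the setup: flatness of $X$ over $D$, combined with the tautological sequence $0 \to (t) \to A[t]/(t^2) \to A \to 0$ tensored with $\sO_X$, should exhibit $\alpha$ as a parameter transversal to the central fiber, so that $d\alpha$ survives in the cotangent space at any point where $\sigma$ lands. This verification is the principal technical hurdle in the proof and is the place where flatness of the deformation and the existence of the section are jointly used.
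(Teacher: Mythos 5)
Your treatment of the first assertion is fine: the paper merely cites \cite{sto2019} for $T(D)\cong\bA_S^1$, and your direct computation of the nilradical of $A[u,v]/(u^2,2uv)$ (using reducedness of $A$ to pass from $f^n=0$ to $f=0$) is a correct, self-contained substitute. For surjectivity your route is in substance the one the paper takes, only phrased invariantly: the paper works in coordinates, starts from the constant arc at the given section, and tries to adjust its first-order part so that the induced endomorphism of $D$ becomes $t\mapsto at$; your condition ``find a derivation $d$ with $d(\alpha)=b$'' is exactly the linear system $\sum_j \tfrac{\partial G_i}{\partial x_j}(x^0)z_j=-aH_i(x^0)$ that appears there.

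The step you defer at the end is, however, a genuine gap and not a removable technicality. A derivation with $d(\alpha)=b\neq 0$ exists if and only if $\alpha$ is nonzero in $\mathfrak{m}/\mathfrak{m}^2$ at the point where the section lands, and flatness of $X\to D$ together with the mere existence of a section does not give this. Already in the paper's own Example \ref{defnode} ($xy=t$, $t^2=0$): at the origin of the central fiber one has $\alpha=xy\in\mathfrak{m}^2$, so $d(\alpha)=x\,d(y)+y\,d(x)=0$ for every derivation, and no first-order arc based there maps to $b\neq 0$; surjectivity holds in that example only because one may base the arc at a smooth point such as $(0,1)$, a move that neither your argument nor the paper's makes room for. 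Worse, under the hypotheses as literally stated the conclusion can fail outright: $X=\spec{k[x]/(x^4)}$ over $D$ via $t\mapsto x^2$ is flat, separated, of finite type, and has a $k$-point, yet on reduced points $x$ must map into $(t)$, so $T(X)\to T(D)$ is constant at the origin. So the missing verification is precisely where an extra hypothesis has to enter (e.g.\ a reduced central fiber, with the arc based at a smooth point of it rather than at the prescribed section); be aware that the paper's proof elides the same point --- its closing diagram only produces arcs factoring through $S$, which necessarily lie over the trivial endomorphism of $D$ --- so you will not find the missing step there.
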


\begin{proof}
The fact that $T(D) \cong \bA_{S}^1$ is proven in Lemma 4.3 on page 141 of \cite{sto2019}. Although it is not difficult, it is slightly less straightforward to show that the natural map $\theta: T(X) \to T(D)$ is surjective. We will provide a proof here as it will be illustrative in regards to other results in this section of the paper.

Therefore, for this second fact, we can assume without loss of generality that $X$ is an affine subscheme of $\bA_{D}^{N}$ defined by equations $F_i = 0$ (for $i = 1, \ldots, s$) of the form $F_i = G_i + tH_i$ where $G_i, H_i$ are elements of $A[x_1, \ldots, x_N]$. Thus, as a subvariety of $\bA_{D}^{2N}$, the equations defining $T(X)$ are given by creating {\it arc variables} $\wideparen{x}_i = y_i + z_it$ and substituting them into $G_i$ to obtain the {\it arc equations} $\wideparen{G}_i +t \wideparen{H}_i=0$. This implies that the equations defining the arc space are of the form
\begin{equation} \label{editeq1}
0 = G_i(y_1, \ldots, y_N)  = t\cdot H_i(y_1, \ldots, y_N) = t\cdot \partial G_i(y_1, \ldots, y_N, z_1, \ldots, z_N)
\end{equation}
where $\partial G_i$ the polynomial such that 
$\wideparen{G}_i = G_i(y_1, \ldots, y_N) + t\cdot\partial G_i$. 

Any point $\alpha$ in $T(D)$ is given by an $a \in A$ defining an endormorphism $t \mapsto a\cdot t$. Thus, any potential point in the fiber $\theta^{-1}(a)$ is given by performing a substitution $at$ for $t$ in Equation \ref{editeq1}. Therefore, any morphism $\beta: D \to  \bA_{D}^{2N}$ which induces $\alpha : D \to D$ can potentially give a well-defined morphism $\beta'$ from $D$ to $T(X)$, and it will be sent to $\alpha$ via the morphism $\theta$. A morphism $S \to X$ will give rise to a morphism $h : S \to T(X)$. Otherwise, $X^{\red}$ would be $S$-smooth since the Jacobian would have no solutions over $A$, and in that case, the statement would be trivial.  Therefore, we let $\beta'$ be any morphism inducing $\alpha$ which factors through $h$. In other words, we have a morphism $\beta'$ which fits into the following commutative diagram:
\[\begin{tikzcd}
 D
 \arrow[drr, bend left , "{\beta'}"]
  \arrow[drrr, bend left, "{\beta}"]
  \arrow[ddrrr, bend right, "{\alpha}"]
  \arrow[dr, "{t =0 }" ] & & \\
&S  \arrow[r,"{h}"]    & T(X) \arrow[r, hook, ""] 
      & \bA_{D}^{2N} \arrow[d, ""] \\
& &\  &D 
\end{tikzcd}\]
 \end{proof}

In particular, in the case that  $S = \spec{k}$ where $k$ is a field, we have the following theorem.

\begin{theorem} \label{tanflat}
Let $S =\spec{k}$ with $k$ a field, let $D$ be the dual numbers over $S$, and assume that $X$ has a  $k$-point. Assume further that $X$ is separated and locally of finite type over $D$ and let $T(X)$ be the reduced tangent bundle. If $T(X)$ is irreducible, then the natural surjective morphism $T(X) \to \mathbb{A}_{k}^{1}$ is flat. 
\end{theorem}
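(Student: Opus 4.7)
The plan is to obtain this as a clean consequence of Lemma \ref{surj} combined with Corollary \ref{1stcor}. The overall strategy is to promote the irreducible reduced scheme $T(X)$ to an integral scheme, use the previous lemma to get surjectivity onto $\mathbb{A}_k^1$, and then invoke the Dedekind-scheme flatness criterion in the form already packaged as Corollary \ref{1stcor}.

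Concretely, I would proceed in three short steps. First, I apply Lemma \ref{surj} with $A = k$: since a field is trivially a reduced Noetherian local ring and $X$ is separated and locally of finite type over $D$ by assumption, all hypotheses of that lemma are met, and the hypothesis of a $k$-point of $X$ provides precisely the morphism $S \to X$ required. Consequently $T(D) \cong \mathbb{A}_k^1$ and the natural morphism $\theta : T(X) \to T(D)$ is surjective. Second, I note that $T(X) = \underline{\Hom}_S(D,X)^{\red}$ is reduced by its very construction, and the hypothesis of the theorem supplies irreducibility, so $T(X)$ is integral. Third, the morphism $\theta : T(X) \to \mathbb{A}_k^1$ is a surjective morphism from an integral scheme to $\mathbb{A}_k^1$, whence Corollary \ref{1stcor} directly yields that $\theta$ is flat, as desired.

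There is no real obstacle here, as the genuine content of the argument has been absorbed into Lemma \ref{surj}; the present theorem is essentially an assembly of the pieces. The only point that deserves mention is the compatibility of the two notions of ``$k$-point'' and ``morphism $S \to X$'' appearing in the two hypotheses, which is immediate since $S = \Spec(k)$. If one wished to weaken the irreducibility assumption, the analogous statement would require knowing that every irreducible component of $T(X)$ dominates $\mathbb{A}_k^1$; that stronger flatness question is genuinely subtle and is presumably one of the motivations for the study of the flat locus of $\theta$ mentioned in the introduction.
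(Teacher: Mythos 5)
Your proof is correct and follows exactly the route the paper takes: its proof of Theorem \ref{tanflat} is the one-line statement that it is a direct consequence of Corollary \ref{1stcor} and Lemma \ref{surj}, which is precisely the assembly you carry out (with the details of integrality and the identification of the $k$-point hypothesis made explicit).
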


\begin{proof}
This is a direct consequence of Corollary \ref{1stcor} and Lemma \ref{surj}. 
\end{proof}

\begin{example} \label{cubicA}
Consider the infinitesimal deformation $$X = \spec{k[x,y,t]/(y^2 - x^3 - t, t^2)}$$ of the cuspidal cubic $C$ given by $y^2 = x^3$ in $\bA_k^2$. We directly compute the tangent space $T(X)$ by assigning variables $a,b,c,d,e,f$ over $k$ and forming arc variables
\begin{equation*}
\wideparen{x} = a + bt, \quad
\wideparen{y} = c+ dt ,\quad
\wideparen{t} = e + ft
\end{equation*}

Then, the coordinate ring for the tangent space is defined by substituting the arc variables into the defining equations for $X$ and then by setting these equations to zero in $A[t]/(t^2)$, where $A=k[a,b,d,e,f]$. Thus, we have the equations 
\begin{equation*}
\begin{split}
0 &= \wideparen{y}^2 - \wideparen{x}^3 - \wideparen{t} = (c^2-a^3) \cdot 1 + (2cd - 3a^2b - f)\cdot t\\
0 &= \wideparen{t}^2 = e^2\cdot 1 + 2ef \cdot t
\end{split}
\end{equation*}
Since $k[t]/(t^2)$ is a vector space over $k$ with basis $\{1, t\}$, these equations imply that 
\begin{equation*}
\underline{Hom}_k(D, X) \cong \spec{k[a,b,c,d,e,f]/(c^2 - a^3, 2cd - 3a^2b - f, e^2, 2ef)}
\end{equation*}
In the reduced structure $e^2 = 0 \implies e = 0$, which implies 
$T(X)$ is isomorphic to  $\mbox{Spec}(k[a,b,c,d,f]/I)$
where $I = (c^2 - a^3, 2cd - 3a^2b - f)$, which is clearly prime ideal. Therefore, the natural morphism $\theta : T(X) \to \spec{k[t]}$ is flat. 
\end{example}

\begin{remark}
As we will frequently use Singular in this section, we include code which demonstrates how to check that a particular ideal $I$ of $R$ is prime. The interested reader may want to use this code to test alternative examples for which the ideal $I$ is not so clearly prime.

\begin{lstlisting}[language = C]
LIB "primdecint.lib"; //Loads library with primary decomposition procedures

ring R  =  0, (a,b,c,d,f), dp;
ideal I = (c2 - a3, 2cd - 3a2b - f);
ideal J = std(I);
primdecZ(I);
print("Compare the above with the ideal I in standard basis below:");
print(J);
\end{lstlisting}
The output of this code is 
\begin{verbatim}
[1]:
   [1]:
      _[1]=3bc2-2acd+af
      _[2]=3a2b-2cd+f
      _[3]=a3-c2
   [2]:
      _[1]=3bc2-2acd+af
      _[2]=3a2b-2cd+f
      _[3]=a3-c2
Compare the above with the ideal I in standard basis below:
3bc2-2acd+af,
3a2b-2cd+f,
a3-c2
\end{verbatim}
This explicitly shows that $I$ is a prime ideal and thus $T(X)$ is irreducible. \end{remark}

\begin{remark}\label{redcent}
The fact that $T(X)$ is irreducible is in stark contrast to the reduced case -- i.e., the central fiber $\theta_f^{-1}(0) \cong T(C)$ is reducible whenever $C$ is a singular curve for simply dimensional reasons!  We briefly adapt this line of thinking to the case of first order deformations of singular curves in Section \ref{sec4}. Note that in general $T(X)$ itself my be either reducible or irreducible for any first order deformation of a singular curve, yet the central fiber will always be reducible for dimensional reasons. We consider some reducible cases below as in reality the morphism $\theta_f$ will still be flat in this case as well.  \end{remark}

\begin{example} \label{fibex}
Here is a ``non-example" to Theorem \ref{tanflat}. Let $Y$ be a smooth scheme over a field $k$. Then, $Y$ is rigid and hence any deformation $X$ over $D$ is isomorphic to the trivial deformation $Y\times_k D$.
Since $Y$ is smooth, we may cover $Y$ by open affines $U$ such that for some $d$ there is an \'{e}tale morphism $U \to \bA_{k}^d$, which implies that there is an isomorphism $T(U) \cong U \times_k \bA_k^d$ and therefore that $T(Y) \to \spec{k}$ is smooth. Since smoothness is stable under base change $T(Y) \times_k \bA_{k}^1 \to \bA_k^1$ is also smooth.  Moreover, one can directly show that $T(X)$ is \'{e}tale locally isomorphic to $T(Y) \times_k \bA_{k}^1$ and therefore
$T(X) \to \bA_k^1$ is smooth and hence flat.   

If all we want to show is flatness, then we do not need to assume $Y$ is smooth as long as $X$ is the trivial deformation of $Y$ over $X$. This is because $T(Y) \to \spec{k}$ is obviously flat, and since flatness is stable under base change (cf. Proposition 9.2 on page 254 of \cite{har2010}), $T(Y) \times_k \bA_{k}^1 \to \bA_k^1$ is also flat. Moreover, one can generally show that $T(X)$ is \'{e}tale locally isomorphic to $T(Y) \times_k \bA_{k}^1$ from whence it follows that $T(X) \to  \bA_{k}^1$  is flat provided $X$ is the trivial deformation of $Y$ over $D$. 
\end{example}

%------------------------------------------------Example 1
%Perhaps, in what follows it would be better to work first with non-reduced structure. 

%A nice consequence of this result, is when we assume further that $X$ is any curve such that $T(X)$ is integral. Then, at any singular point $p \in X$, $\mbox{dim}_k(T_p(X)) \geq 2$. Now, note that every point $\xi \in T(X)$ is given by a tuple $(p, \nu)$ with $\nu : \spec{k[t]/(t^2)} \to X$. Let $\theta: T(X) \to
%\mathbb{A}_{k}^{1}$ and let $F_{\xi}$ denote the fiber over $\theta(\xi) \in\mathbb{A}_{k}^{1}$. Then, ...
%conclusion should be that $X$ is smooth over $D$. But this is not much of a proof yet. 

%--------------------------------------------------------------------------------------------------------------------------------
%-----------------------------------------------Non-Example
%--------------------------------------
\begin{example}\label{defnode}
Here is a more difficult ``non-example" to Theorem \ref{tanflat} where, unlike in Remark \ref{fibex}, we have a non-trivial deformation over the dual numbers. Let $k$ be a field. Consider the infinitesimal deformation 
\begin{equation*}
N_1:= \spec{k[x,y,t]/(xy-t, t^2)}
\end{equation*} of the node $N_0 = \spec{k[x,y]/(xy)}$ over the dual numbers $D=\spec{k[t]/(t^2)}$. 
Let  $a+bt$,  $c+dt$, and $e+ft$ be generic elements of $k[t]/(t^2)$. We substitute these arc variables into the defining equations of $N_1$ to obtain:
\begin{equation*}
\begin{split}
0 &=(a+bt)(c+dt) - (e+ft) = (ac-e)\cdot1 + (ad+bc - f) \cdot t \\
0 &= (e+ft)^2 = e^2\cdot 1 + 2ef\cdot t
\end{split}
\end{equation*}
This gives the equations defining $\underline{Hom}_{k}(D, N_1)$ as an affine scheme over $k$, and in the reduction, $e^2 = 0 \implies e =0$ so that the coordinate ring for $T(N_1) := \underline{Hom}_{k}(D, N_1)^{\red}$ is given by $k[a,b,c,d,f]/(ac, ad+bc - f)$.

It is clear that $T(N_1)$ is not irreducible, yet we will still be able to demonstrate that the morphism $\theta: T(N_1) \to  \spec{k[f]}$ induced from the flat morphism $N_1 \to D$ is again flat. In fact, we first will show that the smooth locus of this is given by $U = \theta^{-1}(\bA_{k}^1\setminus\{0\})$. 

To show this, let $A =k[f]$ and let $B = A[a,b,c,d]/I$ where $I = (F, G)$ with $F = ac$ and $G = ad+bc - f$, then it is enough to calculate the following determinate:
\[\mbox{Jac} := \begin{array}{|cc|}
\frac{\partial}{\partial a} F & \frac{\partial}{\partial c}F \\ 
\frac{\partial}{\partial a} G & \frac{\partial}{\partial c}G\end{array} = \begin{array}{|cc|}
c & a \\ 
d & b 
\end{array} = cb - ad \]
If $\mbox{Jac} = 0$, then $ad = bc $ and so using the equation $G=0$ gives $2ad = f = 2bc$, but either $a= 0$ or $c = 0$ and in either case then $f = 0 $. Therefore, $ f \neq 0$ implies $\mbox{Jac} \neq 0$ which implies the map $\theta$ is smooth away from points which map $f$ to $0$. 

Now, if $f = 0$, then the equation $G= 0$ implies $ad = - bc$, which implies $2ad = \mbox{Jac} = 2bc$.  For $\mbox{char}(k) = 2$, this immediately implies $\mbox{Jac} = 0$. Otherwise, we know either $a=0$ or $c=0$ (this is given by the equation $F =0$) and so in either case $\mbox{Jac} =0$. This together with the preceding paragraph show that the smooth locus of $\theta$ is given by  $U = \theta^{-1}(\bA_{k}^1\setminus\{0\})$, as claimed. 

Actually, one can quickly see that both of the irreducible components of $T(N_1)$ are isomorphic to $Q \times_k \bA_{k}^1$ where $Q = \spec{k[x,y,t]/(xy-t)}$. This is straightforward since on the component given by $a=0$ for example, the coordinate ring reduces to $k[b,c,d,f]/(bc-f)$ where $d$ is now a free variable. The reader may quickly check the other component. Regardless, since the map $\theta$ is smooth away from the fiber above $f = 0$, it has constant relative dimension equal to $2$ on each irreducible component when restricted to the smooth locus $U$. 

Now, notice that the central fiber given by  $f=0$  is actually the reduced tangent space of the node $N_0$:
\begin{equation*}
\theta^{-1}(0) \cong \underline{Hom}_{k}(D, N_0) \cong T(N_0)
\end{equation*}
as it is the spectrum of the ring $k[a,b,c,d]/(ac, ad+bc)$. Then, both irreducible components of $\theta^{-1}(0)$ are isomorphic to $N_1 \times_k \bA_{k}^1$, and so $\mbox{dim}(\theta^{-1}(0)) = 2$. 

In summary, we have explicitly shown that $\theta : T(N_1) \to \bA_{k}^{1} $ has constant relative dimension equal to $2$, and so, by the miracle of flatness  (Theorem 23.1 on page 179 of \cite{mat1987}), $\theta$ must be flat since the base is regular and $T(N_1)$ is a complete intersection and hence it is Cohen-Macaulay. 
\end{example}
\begin{remark} \label{domremark}
Examples \ref{fibex} and \ref{defnode} are not really non-examples because we can relax the condition in Theorem \ref{tanflat} to the case that $T(X)$ is reducible provided the restriction of $\theta : T(X) \to \bA_{k}^{1}$ to each of its irreducible components is a dominate morphism as noted Theorem \ref{LiuFlat}. This is clearly the case in Example \ref{defnode} since, as we noted above, both irreducible components are of the form $N_1 \times_k \bA_{k}^{1}$ and are mapping surjectively onto $\spec{k[f]}$.  In the case of Example \ref{fibex}, the statement is immediate. 
\end{remark}

\begin{example} \label{cmex}
Going back to Example \ref{cubicA}, we will check directly that it is flat in characteristic $2$ but by using either of the technique of \ref{defnode}. One may check that the Jacobian in this case is given by $\mbox{Jac} = a^4$. This then implies that the open subscheme $\theta^{-1}(\bA_k^1 \setminus \{0\})$ is contained in the smooth locus $U$ of $\theta$ (although, they do not agree in this case). Thus, away from $f = 0$, the morphism $\theta$ is of constant relative dimension $2$. 

Moreover, $\theta^{-1}(0)$ is the union of two subschemes $V$ and $W$ each of dimension $2$. Here, $V$ is the singular locus of $\theta$ given by $a = 0$ (the condition that $a=0$ implies $c = 0$ and $f = 0$), and thus $V$ is isomorphic to $\bA_{k}^2$. The subscheme $W$ is cut out by $b = f = 0$, and so $W$ is isomorphic to $C \times_k \bA_{k}^1$. 

Regardless, $\theta$ has fibers of constant dimension equal to $2$ and the base is regular. So, by the miracle of flatness, we just need to make sure that the domain is Cohen-Macaulay. The skeptical reader can run the Singular code provided in Remark \ref{cmcode} to truly check that this ring is Cohen-Macaulay. Interestingly, the morphism $\theta$ in Example \ref{cubicA} is flat in every characteristic, and, again, the underlying reason for this behavior has to do with the fact that each irreducible component is dominating $\bA_k^1$ via $\theta$. 
\end{example}

\begin{remark}
The cause of this degenerate behavior in Example \ref{cmex} is that the embedding of an affine truncated linear $n$-jet scheme into $\bA_k^M$  can be alternatively  described in terms of the universal derivation $\alpha \mapsto (j!\alpha_j^{(l)})$ for $\mbox{char}(k) = 0 $ or $\mbox{char}(k)> n$ (cf. page 5 of \cite{mus2001}). For more on how derivations relate to jet spaces, we refer the interested reader to \cite{voj2004}. For simplicity then we will later restrict to the case where the underlying field $k$ is of characteristic zero. 
\end{remark}

Considering Example \ref{defnode} and the previous remark, it would thus be interesting to know in more generality when the irreducible components dominate $\bA_{S}^{1}$ via the natural morphism $\theta: T(X) \to \bA_{S}^{1}$, whenever $S$ is a reduced Noetherian local ring in any characteristic. It would also seem to be an interesting yet difficult question to try push these types of questions to the non-reduced structure -- i.e., investigate when the morphism  from $\underline{Hom}_{S}(D, X)$ to  $\underline{Hom}_{S}(D,D)$ is flat. We will not directly concern ourselves with these general problems in this paper. If however we restrict ourselves to the reduced structure, then it is natural in this context to ask the following general question.

\begin{question}
Given an arbitrary $S \in \mathbf{Sch}$, let $X, Y,$ and $Z$ be objects of $\sch{S}$ whose structure morphisms are all locally of finite presentation. Assume further that $X$ is finite and flat over $S$ and that $Y$ and $Z$ are both $S$-separated. Given a flat morphism $f: Y \to Z$, when is it the case that the induced natural morphism 
$$\theta_f: \underline{Hom}_S(X,Y)^{\red} \to \underline{Hom}_S(X,Z)^{\red}$$
is also flat?
\end{question}

It is decidedly not true in general that the morphism $\theta_f$ is flat.  We specialize this question to the following simplified case. Let $S = \spec{k}$ where $k$ is a field, $X = \spec{k[t]/(t^2)}$, and  $f: Y \to \mathbb{A}_{k}^{1}$ be a flat family of curves, then it is not always the case that the induced natural morphism  $\theta_f$ on the reduced tangent spaces
$$\theta_f : T(Y) \to \mathbb{A}_{k}^{2}$$
is flat.

\begin{example} \label{nonflat}
Let $W = \spec{A}$ with $A = k[x,y,z]/(x^2-y^2z)$ be the flat family over $\mathbb{A}_k^1 = \spec{k[z]}$. 
Let $\wideparen{x} = a+bt,$ $\wideparen{y} = c+dt,$ and $\wideparen{z}= e+ft$ be arc variables where we compute $\wideparen{x}^2-\wideparen{y}^2\wideparen{z} = 0 $ in the reduction over $k[t]/(t^2)$.
This gives the ideal $$I = (a^2 - c^2e, 2ab - 2cde - c^2f) $$ in the ring $R = k[a,b,c,d,e,f]$ and an isomoprhism $ T(W) \cong \spec{R/I} $. 

If $\mbox{char}(k) = 2 $, then $I$ reduces to the ideal $(a^2 - c^2e, c^2f)$ and we note that $b$ and $d$ are a free variables now.  For $(e,f) = (0,0)$, it is easy to see that the fiber is equal to $\mathbb{A}_{k}^{3} = \spec{k[b,c,d]}$. The dimension of the base $\mathbb{A}_{k}^{2}$ at $(0,0)$ is clearly $2$, so we just need to find the dimension of $T(W)$ at a point in the preimage of $(0,0).$ 

Actually, at any point in the preimage, we have $f \neq 0 \implies c^2 = 0 \implies c = 0$. Then, $0 = a^2-c^2e = a^2$ implies $a = 0$. Thus, for example, the local ring at the maximal ideal defined by the origin is given by localizing $k[b,d,e,f]$ at $(b,d,e,f)$ which is clearly a local ring of Krull dimension $4$. Therefore, the relative dimension of $\theta_f$ at the origin is $2$, yet the dimension of the fiber is $3$. Thus,  $\theta_f$ cannot be flat. 
\end{example}

\begin{remark}\label{cmcode} The ring in $R/I$ in Example \ref{nonflat} is even a Cohen-Macaulay ring, which we can check using Singular \cite{DGPS}. We include this here for illustrative purposes and since the code below is referenced in Example \ref{cmex}.  Following SINGULAR Example 7.7.8 on page 426 of \cite{gre2002}, we will use the following code: 

\begin{lstlisting}[language = C]
LIB "homolog.lib";  //Loads library with homological algebra procedures

//Procedure which calculates depth of a module
proc depth(module M) {ideal m=maxideal(1); int n=size(m); int i;
	while(i<n) {i++;
	if(size(KoszulHomology(m,M,i))==0){return(n-i+1);} } return(0); }			
	
//Procedure tests whether module is CM: returns 1 if true. 	
proc CohenMacaulayTest(module M) {return(depth(M)==dim(std(Ann(M)))); }
	
//Define module and test for CM	
ring R = 2, (a, b, c, d, e, f), dp;		
ideal I = a2 - c2e, c2f;			
module M = I*freemodule(1);			
CohenMacaulayTest(M);				
\end{lstlisting}

The output of this code is $1$, which demonstrates that $R/I$ is Cohen-Macaulay. This is not at  all surprising since $R$ is automatically a complete intersection by Proposition 1.4 on page 7 of \cite{mus2001}, and hence Cohen-Macaulay. 
\end{remark}

\begin{remark}
As far as checking the ring is Cohen-Macaulay in characteristic $2$, it is enough just to work over the finite field $\mathbb{F}_2$, because, by base change (cf., \cite[\href{https://stacks.math.columbia.edu/tag/045P}{Tag 045P}]{stacks-project}), this will also prove that $R/I$ is Cohen-Macaulay over any field $k$ of characteristic equal to $2$.
\end{remark}

\begin{example}
Let us then consider the case of Example \ref{nonflat}, but this time we assume $k = \bQ$.  Actually, we can use Singular to check for flatness directly in this case by asking it to compute the first torsion module. First note, that
\begin{equation*}
\mbox{Tor}_1^{\bQ[e,f]}(\bQ,R/I) \cong \mbox{Tor}_1^{R}(\bQ[a,b,c,d],R/I).
\end{equation*}

\begin{lstlisting}[language=C]
LIB "homolog.lib"; 

ring R = 0, (a,b,c,d,e,f), dp;
ideal I = a2-c2e, 2ab - 2cde - c2f;
matrix Ph[1][2] = I;
matrix Ps[1][2] = e, f;
Tor(1,Ps,Ph);
\end{lstlisting}

The output of this code is 
\begin{verbatim}
// dimension of Tor_1:  3

_[1]=gen(1)
_[2]=gen(2)
_[3]=gen(3)
_[4]=gen(4)
_[5]=f*gen(5)
_[6]=e*gen(5)
_[7]=a*gen(5)
\end{verbatim}
\end{example}

\section{General Auto-Arc Spaces}\label{sec2}

\begin{definition}
Let $i: X \to S$  be an object of $\sch{S}$ and $Z$ an object of $\fatpoints{S}$ where $S$ is an arbitrary scheme. We say that a scheme $Y$ is an {\it infinitesimal deformation over} $Z$ of $X$ if there is a commutative diagram

\[\begin{tikzcd}
X \arrow[d,"{i}"] \arrow[r, hook] & Y\arrow[d, "{j}"]\\
S  \arrow[r] &Z
\end{tikzcd}\]
such that the structure morphism $j : Y \to Z$ is flat and induces an isomorphism $X \cong Y\times_Z S$.
\end{definition}

Given any infinitesimal deformation $Y$ over $Z$  such that $Y$ is separated and locally of finite presentation over $Z$, we define
\begin{equation*}
\sA_Z(Y) := \underline{Hom}_S(Z,Y)^{red}
\end{equation*}
and call it {\it the auto-arc space of} $Y$ {\it with respect to} $Z$, and in the special case that $Y\cong Z$, we call $\sA_Z := \sA_Z(Z)$ {\it the auto-arc space of} $Z$. We therefore have the natural induced morphism
\begin{equation*}
\theta_j : \sA_Z(Y) \to \sA_Z
\end{equation*}

Assume now that $S$ is a reduced scheme, then we have an induced isomorphism
$$\underline{Hom}_S(Z,X)^{\red} \cong \underline{Hom}_S(Z,Y) \times_Z S.$$

\begin{remark}
Note that the morphism $\underline{Hom}_S(Z,Y) \to Z$ is very often non-flat. For instance, if $S = \spec{k}$ with $\mbox{char}(k) \neq 2$, and if $Y$ and $Z$ are both the dual numbers over $S$, then on the level of coordinate rings, the morphism of schemes above induces a ring monomorphism $k[t]/(t^2) \to k[t,s]/(t^2, 2st)$ defined by $t \mapsto t$, which is clearly not flat. \end{remark}

In the category $\sch{S}$, any infinitesimal deformation $Y$ over $Z$ gives rise to the following induced commutative diagram 
\[\begin{tikzcd}
\underline{Hom}_S(Z, X)\arrow[d]  \arrow[r, hook]&\underline{Hom}_S(Z,Y) \arrow[d] \arrow[r, " "] &\underline{Hom}_S(Z,Z)\arrow[d] \\
X\arrow[d,"i"]  \arrow[r, hook] &Y\arrow[d,"j"]  \arrow[r, "j"] & Z  \\
S \arrow[r] & Z  & \
\end{tikzcd}\]
provided $Y$ is separated and locally of finite presentation over $Z$.

If we further assume $S$ and $X$ are reduced schemes, then by taking the fiber product with respect to $S \to Z$, we obtain the following commutative diagram 

\begin{equation}\label{maindiag}
\begin{tikzcd}
 \underline{Hom}_S(Z,X)^{\red}  \arrow[d]  \arrow[r, hook] &\sA_Z(Y) \arrow[d] \arrow[r,"{\theta_j}"] &\sA_Z \arrow[d] \\
X \arrow[r, "{\cong}"] &X \arrow[r, ""] & S
\end{tikzcd}
\end{equation}

\begin{subsection}{The case at the germ of a plane curve singularity}
One special case of the above is 
given when we assume that  $S = \spec{k}$ where $k$ is an algebraically closed field. Given a closed point $x$ of $X$, we let $Z$ be the $n${\it-th order jet} $J_p^nX := \spec{\sO_{X,x}/\mathfrak{m}_x^{n+1}}$ which is an object in $\fatpoints{k}$. In which case, we have the induced closed immersion
\begin{equation*}
\underline{Hom}_S(Z,Z) \hookrightarrow \underline{Hom}_S(Z,X) \times_S Z
\end{equation*}
which extends the diagram above to  the commutative diagram 
\[\begin{tikzcd}
\underline{Hom}_S(Z,Z) \arrow[d] \arrow[r, hook] &\underline{Hom}_S(Z, X)\arrow[d]  \arrow[r, hook]&\underline{Hom}_S(Z,Y) \arrow[d] \arrow[r, " "] &\underline{Hom}_S(Z,Z)\arrow[d] \\
Z \arrow[r, hook] &X\arrow[d,"i"]  \arrow[r, hook] &Y\arrow[d,"j"]  \arrow[r, "j"] & Z \\
&S \arrow[r] &Z & \
\end{tikzcd}\]
Then, by taking the fiber product with respect to $S \to Z$, we obtain the following commutative diagram 

\[\begin{tikzcd}
\sA_Z \arrow[d] \arrow[r,hook] & \underline{Hom}_S(Z,X)^{\red}  \arrow[d]  \arrow[r, hook] &\sA_Z(Y) \arrow[d] \arrow[r,"{\theta_j}"] &\sA_Z \arrow[d] \\
S \arrow[r, hook] &X \arrow[r, "{\cong}"] &X \arrow[r, ""] & S
\end{tikzcd}\]

Thus, $\sA_Z$ is a closed subscheme of the fiber above the point given by $S \to X$, or, in other words, we have a closed immersion
\begin{equation}\label{isomor}
\sA_Z \hookrightarrow \underline{Hom}_S(Z,X)^{\red} \times_X S
\end{equation}

\begin{proposition}\label{yay}
Let $S = \spec{k}$ where $k$ is an algebraically closed field. Let $X$ be a reduced, separated $S$-scheme, locally of finite type over $S$. Then, the closed immersion \ref{isomor} above is an isomorphism. 
\end{proposition}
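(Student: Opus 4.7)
The plan is to verify the isomorphism through functor-of-points analysis, after reducing to an affine-local computation around $x$. Since $k$ is algebraically closed, the residue field $B/\mathfrak{m}_x = k$ (affine-locally $X = \spec{B}$), and the quotient $B/\mathfrak{m}_x^{n+1}$ is already a local Artinian ring with unique maximal ideal $\mathfrak{m}_x/\mathfrak{m}_x^{n+1}$; hence $\bar{A} := \sO_{X,x}/\mathfrak{m}_x^{n+1}$ is canonically identified with $B/\mathfrak{m}_x^{n+1}$, and the map $Z \hookrightarrow X$ is the closed immersion cut out by $\mathfrak{m}_x^{n+1} \subset B$.

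For a reduced test $k$-scheme $Y = \spec{C}$, I would describe the $Y$-points of each side: $\sA_Z(Y)$ corresponds to $k$-algebra homomorphisms $\Psi : \bar{A} \to \bar{A} \otimes_k C$ (since reduction is transparent on reduced-test points), while $(\underline{Hom}_S(Z,X)^{\red} \times_X S)(Y)$ corresponds to $k$-algebra homomorphisms $\Phi : B \to \bar{A} \otimes_k C$ whose composition with the residue map $\bar{A} \otimes_k C \to C$ agrees with $B \to B/\mathfrak{m}_x = k \hookrightarrow C$. The crux of the argument is to show that any such $\Phi$ factors uniquely through the quotient $q : B \twoheadrightarrow \bar{A}$, giving the sought bijection. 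The evaluation condition forces $\Phi(\mathfrak{m}_x) \subseteq \bar{\mathfrak{m}} \otimes_k C$, where $\bar{\mathfrak{m}}$ is the maximal ideal of $\bar{A}$; since $\bar{\mathfrak{m}}^{n+1} = 0$, the ideal $\bar{\mathfrak{m}} \otimes_k C$ is nilpotent of index $n+1$ in $\bar{A} \otimes_k C$. For $s \in B \setminus \mathfrak{m}_x$, the element $\Phi(s) = s(x)\cdot 1 + (\text{element of } \bar{\mathfrak{m}} \otimes_k C)$ is a unit (a unit in $k$ plus a nilpotent is a unit), so $\Phi$ extends uniquely to $B_{\mathfrak{m}_x} \to \bar{A} \otimes_k C$; then $\Phi(\mathfrak{m}_x^{n+1}) \subseteq (\bar{\mathfrak{m}} \otimes_k C)^{n+1} = 0$ forces this extension to factor through $B_{\mathfrak{m}_x}/(\mathfrak{m}_x B_{\mathfrak{m}_x})^{n+1} = \bar{A}$, producing the required $\Psi$.

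Applied on all reduced test schemes (in particular on $k$-points), this bijection shows that the closed immersion $\sA_Z \hookrightarrow \underline{Hom}_S(Z,X)^{\red} \times_X S$ is surjective on underlying topological spaces. Since $\sA_Z$ is a reduced scheme locally of finite type over the algebraically closed field $k$, once the target is also known to be reduced the surjective closed immersion is forced to be an isomorphism. The main obstacle I anticipate is precisely the reducedness of $\underline{Hom}_S(Z,X)^{\red} \times_X S$: although $\underline{Hom}_S(Z,X)^{\red}$ is reduced by construction, its scheme-theoretic fiber over the closed point $x$ need not a priori inherit reducedness, and handling this will likely require analyzing the local structure of $\underline{Hom}_S(Z,X)$ near the preimage of $x$ to confirm that the ideal $\mathfrak{m}_x \cdot \sO(\underline{Hom}_S(Z,X)^{\red})$ is radical.
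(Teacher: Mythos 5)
Your functorial computation is correct as far as it goes, and it is a genuinely self-contained argument where the paper simply defers to Lemma 3.2 of \cite{auto}. The affine-local reduction is legitimate (since $Z$ is a one-point scheme, any morphism $Z\times_k Y \to X$ lying over $x$ factors through an affine neighborhood of $x$), the identification $\sO_{X,x}/\mathfrak{m}_x^{n+1}\cong B/\mathfrak{m}_x^{n+1}$ is right, and the factorization of $\Phi$ through $q: B\twoheadrightarrow \bar{A}$ via nilpotence of $\bar{\mathfrak{m}}\otimes_k C$ is exactly the mechanism that makes the statement work. In fact your factorization argument nowhere uses that $C$ is reduced, so you actually prove the stronger statement that $\underline{Hom}_S(Z,Z)\cong \underline{Hom}_S(Z,X)\times_X S$ as schemes, before any reduction is taken; it would be worth stating it that way, because it isolates cleanly what is still missing.

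What is missing is, as you yourself flag, the reducedness of $\underline{Hom}_S(Z,X)^{\red}\times_X S$, and this is a genuine gap rather than a routine loose end: it is the entire nontrivial content of the proposition. Your bijection on reduced test schemes (equivalently, the scheme isomorphism above followed by reduction) only shows that the closed immersion identifies $\sA_Z$ with the \emph{reduction} of the fiber, i.e.\ that it is a surjective closed immersion; a surjective closed immersion out of a reduced scheme is an isomorphism only when the target is reduced, and scheme-theoretic fibers of reduced schemes are not reduced in general (the fiber of the non-reduced Hom scheme over $x$ is $\underline{Hom}_S(Z,Z)$, which is typically non-reduced, so something specific about passing to $\underline{Hom}_S(Z,X)^{\red}$ first must be used). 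Concretely, one must show that the nilradical of $\sO(\underline{Hom}_S(Z,Z))$ is generated by the restriction of the nilradical of $\sO(\underline{Hom}_S(Z,X))$ together with $\mathfrak{m}_x$, and nothing in your argument addresses this. The paper does not prove it either but imports it from Lemma 3.2 of \cite{auto}; to complete your proof you would need either to cite that lemma or to supply the local analysis you allude to in your final sentence.
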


\begin{proof}
This is essentially a restatement of Lemma 3.2 of \cite{auto}. The only difference being that finite type there is being replaced with locally of finite type here. 
\end{proof}

\begin{example}
Let $X$ be a smooth curve over an algebraically closed field $k$ and let $Y$ be the trivial deformation over the dual numbers $D = J_p^1X$. Then, $\sA_D(Y) \cong  \underline{Hom}_S(D,X)^{\red}  \times_k \sA_D$.  Note that these spaces are all tangent bundles as studied in Section \ref{sec1}. Thus, we have the following commutative diagram 
\[\begin{tikzcd}
\bA_k^1 \arrow[d] \arrow[r,hook] &T(X) \arrow[d]  \arrow[r, hook] &T(X) \times_k \bA_k^1 \arrow[d] \arrow[r,"{\theta_j}"] &\bA_k^1 \arrow[d] \\
\spec{k} \arrow[r, hook] &X \arrow[r, "{\cong}"] &X \arrow[r, ""] & \spec{k}
\end{tikzcd}\]
where $\theta_j$ is projection onto the second factor. Note that $T(X)$ is \'{e}tale locally isomorphic to $X \times_k \bA_k^1$ in this case since $X$ is smooth. 
\end{example}

\begin{example} \label{planearc}
Let $k$ be an algebraically closed field with $\mbox{char}(k) \neq 2, \ 3$. Let $X$ be the cuspidal cubic $\spec{k[x,y]/(y^2 -x^3)}$ and let $p$ be the singular point given by the origin. Let $Z = J_p^{n-1}X$ and let $\sL_m(X) =  \underline{Hom}_S(J_p^mX,Y)^{\red} $ denote the reduced truncated linear arc space. It is proven in \cite{sto2017} that 
$\sA_Z \cong \sL_{m}(X) \times_k \bA_k^7$
where $m = 2(n-3)$ whenever $n \geq 4$.  Thus, in this case, for an infinitesimal deformation $Y$ of $X$ over $Z$, we have
\[\begin{tikzcd}
\sL_{m}(X) \times_k \bA_k^7 \arrow[d] \arrow[r,hook] & \underline{Hom}_S(Z, X)^{\red} \arrow[d]  \arrow[r, hook] & \sA_Z(Y) \arrow[d] \arrow[r,"{\theta_j}"] &\sL_{m}(X) \times_k \bA_k^7 \arrow[d] \\
\spec{k} \arrow[r, hook] &X \arrow[r, "{\cong}"] &X \arrow[r, ""] & \spec{k}
\end{tikzcd}\]

\end{example}

\begin{remark}\label{help}
It is worth noting that the relationship between $\sL_m(X)$ and the auto-arc space $\sA_Z$ noticed in Example \ref{planearc} has been generalized to all plane curve singularities $(X,p)$. This is worked out in significant detail in \cite{auto}. \end{remark}

\begin{question}
Given a plane curve singularity $(X,p)$, it should be somewhat straightforward to extend the above diagram to the so-called {\it mixed auto-arc spaces} $\underline{Hom}_S(J_p^nX, J_p^mX)^{\red}$ for $n, \ m$ sufficiently large. More generally, it should be possible to obtain ``closed expressions" related to the truncated linear arc spaces (say in the Grothendieck ring of varieties) of $\underline{Hom}_S(Z, Z')^{\red}$ whenever $Z$ and $Z'$ are jets of different plane curve singularities. For higher embedding dimension, say for germs of surface singularities, computations carried out by the author clearly show a ``recursive pattern" for $\sA_Z$ in the Grothendieck ring of varieties. Moreover, they seem to be related to iterated linear jet spaces, but establishing a direct relationship in analogy to the situation for plane curves as mentioned in Remark \ref{help} remains somewhat elusive. 
\end{question}

\end{subsection}

\begin{subsection}{Further general statements for auto-arc spaces.}

We go back to assuming that $X$ is an arbitrary reduced scheme over $S=\spec{k}$ where $k$ is an algebraically closed field. We let $Z$ be an arbitrary object of $\fatpoints{k}$ and we assume that $Y$ is a local deformation of $X$ over $Z$ which is also separated and locally of finite type over $Z$. Then, as noted at the beginning of Section \ref{sec2}, we have the following commutative diagram
\[\begin{tikzcd}
 \underline{Hom}_S(Z,X)^{\red}  \arrow[d]  \arrow[r, hook] &\sA_Z(Y) \arrow[d,"{\pi}"] \arrow[r,"{\theta}"] &\sA_Z \arrow[d] \\
X \arrow[r, "{\cong}"] &X \arrow[r, ""] & S
\end{tikzcd}\]

Of course, this immediately implies that we have a natural induced morphism
\begin{equation*}
\sA_Z(Y) \xrightarrow{\pi\times\theta} X \times_S\sA_Z
\end{equation*}
which is locally a piecewise trivial fibration with affine fiber whenever $X$ is smooth, and in fact we have the following lemma. 

\begin{lemma}\label{etlem}
Given our basic assumptions of this particular subsection,  let $X_{\sm}$ denote the smooth locus of $X$. Then, the morphism $\pi\times \theta$ above is a piecewise trivial fibration with affine fibers away from the singular locus $X\setminus X_{\sm}$. Moreover, 
$\underline{Hom}_S(Z,X_{\sm})^{\red} \times_k \sA_Z$ and $\pi^{-1}(X_{\sm})$ are \'{e}tale locally isomorphic. 
\end{lemma}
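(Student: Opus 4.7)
The plan is to trivialise the deformation $Y$ étale locally on $X_{\sm}$ and then exploit the fact that $\uHom_{S}(Z,-)$ preserves fibre products in the target. Once $Y$ becomes the trivial deformation $\tilde X \times_{k} Z$ after pulling back along an étale cover $\tilde X \to X_{\sm}$, the auto-arc space splits as $\uHom_{S}(Z,\tilde X) \times_{k} \sA_{Z}$, and under this splitting the map $\pi \times \theta$ becomes the product of the base-point evaluation $\uHom_{S}(Z,\tilde X) \to \tilde X$ with the identity on $\sA_{Z}$. All the remaining claims then reduce to standard facts about jet schemes of smooth varieties, along the lines of Mustata.

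First I would verify the étale-local triviality of $Y$ along $X_{\sm}$: the morphism $j : Y \to Z$ is flat and its reduced central fibre $X$ is smooth at every point of $X_{\sm}$, so openness of the smooth locus for flat morphisms implies that $j$ itself is smooth in an open neighbourhood of $X_{\sm}$ in $Y$. A standard deformation-theoretic argument, applied successively to the filtration of $\sO(Z)$ by powers of its augmentation ideal and using the vanishing of $H^{1}(U,T_{U/k})$ on small enough affine opens $U \subset X_{\sm}$, then produces an étale cover $\tilde X \to X_{\sm}$ together with a $Z$-isomorphism $Y \times_{X_{\sm}} \tilde X \cong \tilde X \times_{k} Z$.

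The second step is formal. Since $\uHom_{S}(Z,-)$ commutes with fibre products in the target, $\uHom_{S}(Z,\tilde X \times_{k} Z) \cong \uHom_{S}(Z,\tilde X) \times_{k} \uHom_{S}(Z,Z)$. A Mustata-style analysis (cf.\ \cite{mus2001}) shows that for smooth $\tilde X$ the base-point evaluation $\uHom_{S}(Z,\tilde X) \to \tilde X$ is Zariski locally a trivial affine bundle with fibre $\bA_{k}^{d(n-1)}$, where $d = \dim \tilde X$ and $n = \dim_{k} \sO(Z)$; in particular $\uHom_{S}(Z,\tilde X)$ is smooth and hence reduced, so passing to reductions yields an isomorphism $\pi^{-1}(X_{\sm}) \times_{X_{\sm}} \tilde X \cong \uHom_{S}(Z,\tilde X) \times_{k} \sA_{Z}$, under which $\pi \times \theta$ becomes the product of the base-point evaluation with $\mathrm{id}_{\sA_{Z}}$. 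Descending along the étale cover $\tilde X \to X_{\sm}$ gives the first claim about $\pi \times \theta$. For the second claim, the unique infinitesimal lifting property of the étale morphism $\tilde X \to X_{\sm}$ along the nilpotent thickening $S \hookrightarrow Z$ identifies $\uHom_{S}(Z,X_{\sm}) \times_{X_{\sm}} \tilde X$ with $\uHom_{S}(Z,\tilde X)$, so that both $\pi^{-1}(X_{\sm})$ and $\uHom_{S}(Z,X_{\sm})^{\red} \times_{k} \sA_{Z}$ become isomorphic after pulling back along $\tilde X \to X_{\sm}$, which is the intended sense of being étale locally isomorphic.

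The hardest part will be the first step, since the deformation-theoretic trivialisation has to be carried out over a general connected fat point $Z$ rather than just the dual numbers; this requires tracking the vanishing of the relevant obstruction classes layer by layer along the filtration of $\sO(Z)$ by powers of its augmentation ideal, and handling it compatibly with the reduced-structure conventions used throughout. Once this is established, the remaining work is a formal manipulation of the $\uHom$ functor together with a routine descent argument for the piecewise triviality of affine bundles.
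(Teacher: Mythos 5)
Your proposal is correct and follows essentially the same route as the paper: trivialise the deformation over (affine) opens of $X_{\sm}$ using the vanishing of $H^1$ of the tangent sheaf, split $\underline{Hom}_S(Z,-)$ of the trivial deformation as a product with $\sA_Z$, and invoke the Mustata-style affine-bundle structure of jet schemes of smooth varieties. The paper's proof is just a terser version of this, working directly with affine opens $U_\alpha$ admitting \'etale maps to $\bA_k^{r_\alpha}$ rather than passing to a separate \'etale cover.
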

\begin{proof}
We can find a covering by affine opens $U_{\alpha} \subset X_{\sm}$ with \'{e}tale morphisms $U_{\alpha} \to \bA_k^{r_{\alpha}}$. We therefore have open affines $V_{\alpha} \cong U_{\alpha}\times_k Z$ given by the embedded deformation of $U_{\alpha} \subset X$, which will 
allow us to cover $\pi^{-1}(X_{\sm})$  by $V_{\alpha}$ with \'{e}tale morphisms $V_{\alpha} \to \bA_k^{r_{\alpha}}\times_k Z$. Thus, away from the singular locus 
\begin{equation*}
\sA_Z(Y)\times_X V_{\alpha} \cong \sA_Z(V_{\alpha}) \cong \underline{Hom}_S(Z, U_{\alpha})^{\red} \times_k \sA_Z
\end{equation*}
\end{proof}

\begin{remark} \label{oppmap}
In general, the closed immersion $\iota : \underline{Hom}_S(Z, X)^{\red} \into \sA_Z(Y)$  admits a section $s$ such that $s\circ \iota$ is the identity morphism, which then by the universal property of fiber products yields an induced natural morphism $$\sA_Z(Y) \to \underline{Hom}_S(Z, X)^{\red} \times_k \sA_Z$$
which, by Lemma \ref{etlem}, is \'{e}tale locally an isomorphism away from the singular locus $X\setminus X_{\sm}$. 
Actually, the section $s$ is induced by applying the reduction functor to the morphism 
$$\underline{Hom}_S(Z,Y) \to \underline{Hom}_S(Z,X)$$
which is induced from restricting the image of a morphism $Z \to Y$ to the image of the closed immersion $X \into Y$. 
\end{remark}

\begin{lemma} \label{basiclem}
Let $O \in \sA_Z$ correspond to the trivial endomorphism of $Z$ and assume our basic assumptions of this particular subsection. Then, 
\begin{equation*}
 \underline{Hom}_S(Z,X)^{\red} \cong \theta^{-1}(O)
\end{equation*}
\end{lemma}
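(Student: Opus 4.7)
The plan is to establish the isomorphism by comparing functors of points and exploiting the universal property of the fiber product $X = Y \times_Z S$. The key observation is that a morphism $\varphi : Z \to Y$ satisfies $j \circ \varphi = O$ if and only if $\varphi$ factors through the closed immersion $X \hookrightarrow Y$, and this equivalence relativizes verbatim to arbitrary test schemes.

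Concretely, for any reduced $S$-scheme $T$, an element of $\sA_Z(Y)(T)$ is a $T$-morphism $\varphi_T : Z \times_S T \to Y \times_S T$, and its image under $\theta$ is $j_T \circ \varphi_T$, where $j_T := j \times \mathrm{id}_T$. The condition $\theta(\varphi_T) = O_T$ amounts to saying that $j_T \circ \varphi_T$ factors through the closed immersion $T \hookrightarrow Z \times_S T$ induced by the section $S \hookrightarrow Z$. By the universal property of the fiber product $X \times_S T \cong (Y \times_S T) \times_{Z \times_S T} T$, this is in turn equivalent to $\varphi_T$ factoring through $X \times_S T \hookrightarrow Y \times_S T$. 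This produces a natural bijection $\theta^{-1}(O)(T) \cong \Hom_S(Z \times_S T, X \times_S T) = \underline{Hom}_S(Z, X)^{\red}(T)$.

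To conclude, I would observe that this functorial bijection is induced by an actual morphism of schemes $\underline{Hom}_S(Z, X)^{\red} \to \theta^{-1}(O)$, constructed by post-composing with $X \hookrightarrow Y$; this is exactly the horizontal arrow appearing in diagram \eqref{maindiag}, and the composite into $\sA_Z$ lands in the single reduced point $O$ by the calculation above. Since both $\underline{Hom}_S(Z, X)^{\red}$ and $\theta^{-1}(O)$ are reduced schemes with matching functors of points on all reduced test schemes, Yoneda's lemma delivers the desired isomorphism. The main obstacle, and the subtlety that warrants the most care, is the behavior of the reduced structure under the fiber product defining $\theta^{-1}(O)$: scheme-theoretic fibers of morphisms between reduced schemes need not be reduced, so one either verifies directly that no new nilpotents are introduced when restricting to the fiber over $O$, or one consistently equips $\theta^{-1}(O)$ with its induced reduced structure, which is in keeping with the reduced conventions governing auto-arc spaces throughout the paper.
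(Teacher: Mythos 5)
Your proof is correct, and its engine is the same one the paper relies on: a morphism $Z \to Y$ whose composite with $j : Y \to Z$ is the trivial endomorphism factors through $X \cong Y \times_Z S$ by the universal property of the fibre product. The difference is in how far each argument is carried. The paper's proof works only with closed points and only explicitly in one direction --- it takes a point of $\underline{Hom}_S(Z,X)^{\red}$, post-composes with $X \hookrightarrow Y$, and checks that the result lies over $O$ --- leaving the converse (that every point of $\theta^{-1}(O)$ arises this way) and the identification of scheme structures buried in the phrase ``clearly true.'' You instead run the factorization criterion over an arbitrary reduced test scheme $T$ and invoke Yoneda on the subcategory of reduced schemes, which handles both directions at once and yields an isomorphism of schemes rather than a bijection of $k$-points; this is a genuine strengthening, and it is the version one would actually want to cite later. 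You are also right to isolate the one real subtlety the paper passes over: the scheme-theoretic fibre $\theta^{-1}(O)$ of a morphism of reduced schemes need not be reduced, so the stated isomorphism should be read with $\theta^{-1}(O)$ carrying its induced reduced structure, consistent with the paper's conventions for auto-arc spaces. One small point worth a line in your write-up: when you say $\theta(\varphi_T) = O_T$ ``amounts to'' $j_T \circ \varphi_T$ factoring through the section $T \hookrightarrow Z \times_S T$, the a priori weaker factorization condition does recover equality with $O_T$, because composing with the projection $Z \times_S T \to T$ and using that $\varphi_T$ and $j_T$ are $T$-morphisms forces the factoring map to be that projection.
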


\begin{proof}
Let $Z=\spec{R}$ where $(R, \mathfrak{m})$ is a finitely generated local Aritinian $k$-algebra, and let $O$ denote the point on $\sA_Z$ which corresponds to the ring endomorphism $\varphi_O$ of $R$ given by the map onto the residue field $k$ (i.e., $\varphi_O$ is the composition $R \to R/\mathfrak{m} \into R$). 
Then, this lemma is clearly true since the problem is local (i.e., we may assume $X$ is an affine scheme) and therefore we may reduce to case where a point on $\underline{Hom}_S(Z,X)^{\red}$ gives rise directly to a morphism $h: Z \to X$ which fits into the commutative diagram
\[\begin{tikzcd}
Z \arrow[r,"h"] \arrow[rd] & X \arrow[d,"{i}"] \arrow[r, hook] & Y\arrow[d, "{j}"]\\
 \ & S  \arrow[r] &Z
\end{tikzcd}\]
Thus, post-composing with the closed immersion $X\into Y$ gives a morphism $\bar{h} : Z \to Y$ and therefore corresponds to a point on $\sA_Z(Y)$, which means that the point on $\sA_Z$ given by $\theta(\bar{h})$ must be equivalent to the morphism $Z \to S \to Z$ which is precisely the morphism $\spec{\varphi_O}$ where $\varphi_O$ is the ring endomorphism discussed above. 
\end{proof}

\end{subsection}
\begin{subsection}{The situation for locally complete intersections.}

An object $X$ of $\sch{k}$ which is reduced, separated and finite type over $k$ is called a {\it variety over} $k$. We have the following well-known result concerning deformations of locally complete intersection varieties. We need the following fact.

\begin{proposition}\label{deformlci}
Let $Z \in \fatpoints{k}$ with $k$ a field. Moreover, let $X$ be  locally complete intersection variety over $k$.  Then, any deformation $Y$ over $Z$ is locally complete intersection. 
\end{proposition}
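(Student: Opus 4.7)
The strategy is to work Zariski-locally on $Y$: lift a local complete intersection presentation of $X$ along the nilpotent thickening $X \hookrightarrow Y$ using formal smoothness of an ambient smooth variety, and then verify that the lifted defining equations remain a regular sequence by exploiting the flatness of $Y$ over $Z$.

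First, I would fix a point $y \in Y$ lying over $x \in X$. Since $X$ is locally a complete intersection at $x$, after passing to a suitable affine open neighborhood $W \subset X$ of $x$, there is a closed immersion $W \hookrightarrow V$ into an affine smooth $k$-variety $V$ whose ideal $I \subset \mathcal{O}_V$ is generated by a regular sequence $f_1, \dots, f_r$. Let $W_Y \subset Y$ be the corresponding open subscheme with reduction $W$, and write $Z = \spec{R}$ with $R$ a finite-dimensional Artinian local $k$-algebra having maximal ideal $\mathfrak{m}_R$. The trivial deformation $V_Z := V \times_k Z$ is smooth over $Z$, and the infinitesimal lifting property of $V \to \spec{k}$ produces a $Z$-morphism $W_Y \to V_Z$ extending $W \hookrightarrow V$ along the nilpotent thickening $W \hookrightarrow W_Y$; since $\mathfrak{m}_R$ is nilpotent, Nakayama's lemma upgrades this lift to a closed immersion.

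Next, let $J \subset \mathcal{O}_{V_Z}$ be the ideal sheaf of $W_Y$. Flatness of $\mathcal{O}_{W_Y}$ over $R$ yields $\operatorname{Tor}_1^R(\mathcal{O}_{W_Y}, k) = 0$, so the short exact sequence $0 \to J \to \mathcal{O}_{V_Z} \to \mathcal{O}_{W_Y} \to 0$ gives a canonical isomorphism $J \otimes_R k \cong I$. Choosing lifts $\tilde{f}_i \in J$ of the $f_i$, a second application of Nakayama with the nilpotent ideal $\mathfrak{m}_R$ shows that $\tilde{f}_1, \dots, \tilde{f}_r$ generate $J$.

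The main obstacle is verifying that $\tilde{f}_1, \dots, \tilde{f}_r$ remains a regular sequence on $\mathcal{O}_{V_Z}$. I would argue this via the Koszul complex $K_\bullet(\tilde{f}_\bullet; \mathcal{O}_{V_Z})$, a complex of $R$-flat modules (since $\mathcal{O}_{V_Z}$ is $R$-flat) whose reduction modulo $\mathfrak{m}_R$ is $K_\bullet(f_\bullet; \mathcal{O}_V)$, exact in positive degrees by regularity of $f_\bullet$ in $\mathcal{O}_V$. Combined with $R$-flatness of $H_0 = \mathcal{O}_{W_Y}$, the standard local criterion of flatness propagates the vanishing of higher Koszul homology from the special fiber back to $V_Z$, yielding the desired regularity of $\tilde{f}_\bullet$. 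Consequently $W_Y$ is cut out of the smooth $Z$-scheme $V_Z$ by a regular sequence, which is precisely the local complete intersection property for $Y$ at $y$.
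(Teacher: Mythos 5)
Your proof is correct, but it takes a genuinely different route from the paper, which does not argue the proposition at all: the paper's ``proof'' is a one-line citation of Theorem 9.2 of Hartshorne's \emph{Deformation Theory}. What you have written is, in effect, a self-contained proof of the result behind that citation, and it is the standard one: embed a chart $W$ of $X$ as a complete intersection in a smooth affine $V$, lift the closed immersion across the nilpotent thickening using formal smoothness of $V$ over $k$ (your two invocations of Nakayama are legitimate because $\mathfrak{m}_R$ is nilpotent, so no finiteness hypotheses are needed), identify $J\otimes_R k$ with $I$ via $\operatorname{Tor}_1^R(\mathcal{O}_{W_Y},k)=0$, and then deform the regular sequence using $R$-flatness. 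The advantage of your version is that it makes visible exactly where flatness of $Y\to Z$ enters, which is thematically relevant to the rest of the paper. Two small points worth tightening. First, what the Koszul argument literally yields is that $\tilde f_1,\dots,\tilde f_r$ is a \emph{Koszul-regular} sequence; that is in fact the right notion for the lci property, but if you want an honest regular sequence you should either localize at the points of $W_Y$ (where the two notions coincide for Noetherian local rings) or run the induction one element at a time, using the lemma that if $M$ is $R$-flat and $f$ is a nonzerodivisor on $M\otimes_R k$, then $f$ is a nonzerodivisor on $M$ and $M/fM$ is again $R$-flat. Second, in the Koszul step the flatness of $H_0=\mathcal{O}_{W_Y}$ is not actually needed as an input: filtering $K_\bullet(\tilde f_\bullet;\mathcal{O}_{V_Z})$ by the powers $\mathfrak{m}_R^j$ and using flatness of the terms already shows that acyclicity in positive degrees descends from the special fiber. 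Neither point is a gap.
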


\begin{proof}  This is Theorem 9.2 on page 74 of \cite{hart2009}.
\end{proof}

Assume in the rest of this subsection that $X$ is a variety of dimension $d$. Let $\delta_Z := \dim{\ \sA_Z}$ and $\ell:=\ell(Z)$ be the length of $Z$. In general, we have $$\dim{\ \underline{Hom}_S(Z,X)} \geq d\ell$$ so that 
\begin{equation} \label{inediteq1}
\dim{\ \sA_Z(Y)} \geq d\ell +\delta_Z
\end{equation} 
by Lemma \ref{etlem} and Remark \ref{oppmap}.  
Now, following the decomposition of Proposition 1.4 on page 7 of \cite{mus2001}, we consider 
\begin{equation} \label{dimauto}
\sA_Z(Y) = \pi^{-1}(X_{\sing}) \cup\overline{\pi^{-1}(X_{\sm})}
\end{equation}
where $X_{\sing}$ denotes the singular locus of $X$. Therefore, if we assume moreover that $\sA_Z(Y)$ is pure dimensional, then we obviously have
\begin{equation} \label{bediteq1}
\dim{\ \sA_Z(Y)} = d\ell +\delta_Z
\end{equation}
and if $\sA_Z(Y)$ is also irreducible, then 
\begin{equation*}
\dim{\  \pi^{-1}(X_{\sing})} <  d\ell+\delta_Z
\end{equation*}

Notice that Inequality \ref{inediteq1} then clearly implies that 
\begin{equation}
\frac{\delta_{Z}}{\ell} \leq \frac{\dim{\ \sA_Z(Y)}}{\ell} - d
\end{equation}
For any closed germ $(Z,P)$ giving rise to a limit of fat points $Z_n$ and a sequence of deformations $Y_n$ over $Z_n$ of $X$, we define
\begin{equation}
\begin{split}
\delta_{(Z,P)}^{*}(X) &:= \lim_{n\to \infty}  \frac{\dim{\ \sA_{Z_n}(Y_n)}}{\ell(Z_n)} - d \\
e(Z,P) &: = \lim_{n\to \infty}  \frac{\dim{\ \sA_{Z_n}}}{\ell(Z_n)} 
\end{split}
\end{equation}
where the former is a slight variant of the {\it asymptotic defect} defined in Equation 28 on page 26 of \cite{sch2} and the later is defined without change on page 27 of \cite{sch2}. We note that we then have the inequality 
\begin{equation}
e(Z,P) \leq \delta_{(Z,P)}^{*}(X)
\end{equation}
\begin{remark}
We are suppressing notation here.  More explicitly, the asymptotic defect defined above is heavily dependent on the choice of deformations. Regardless, our goal here is just to simply produce the lower bound $e(Z,P)$.  
\end{remark}

Thus, when the limits above exist, we can consider the so called {\it regulated defect} given by $\delta_{(Z,P)}^{*}(X)/e(Z,P)$. More generally, we define
\begin{equation}
R_{(Z,P)}^{\sY}(X) :=   \limsup_{n}\{\frac{ \dim \ \sA_{Z_n}(Y) -d\ell(Z_n)}{\dim \ \sA_{Z_n}}\}
\end{equation}
and we call it the {\it regulated defect of} $X$ {\it at} $(Z,p)$ {\it along the formal deformation} $\sY=\varprojlim Y_n$ where $Y_n$ is an infinitesimal deformations of $X$ over $Z_n$. 

Regardless, we continue by adapting the proof of Proposition 1.4 on page 7 of \cite{mus2001} to the current situation above, and so we now assume that $\dim{\ \sA_Z(Y)} = d\ell +\delta_Z$ where $d$ is the dimension of $X$ and $\ell$ is the length of the fat point $Z$. Assume now that $X$ is a locally complete intersection variety over $k$ and let $Y$ be a deformation of $X$ over a fat point $Z$ so that, by Theorem \ref{deformlci}, $Y$ is also a locally complete intersection variety over $Z$.  The problem is local so we can reduce to the case where $X$ and $Y$ are affine, and, of course, we choose these affines so that $Y$ is a complete intersection. Therefore, by assumption $Y \subset \bA_{Z}^N$ is defined by $N-d$ equations. 

We therefore have that $\sA_Z(Y)$ is defined by $(N-d)\ell$ equations as subvariety of  $\sA_{Z}(\bA_Z^N) \cong \bA_{k}^{N\ell}\times_k\sA_Z$. Therefore, any irreducible component of $\sA_Z(Y)$ has dimension  greater than or equal to $N\ell +\delta_Z - (N-d)\ell = d\ell+\delta_Z$. But, then by assumption on the dimension of $\sA_Z(Y)$, it must be of pure dimension and a complete intersection.  We therefore have the following result. 

\begin{theorem} \label{lci}
Let $X$ be a locally complete intersection variety over a field $k$ and let $Z$ be an object of $\fatpoints{k}$. Let $Y$ be a deformation of $X$ over $Z$ and assume that $\dim \ \sA_Z(Y) = d\ell +\delta_Z$.  Then, $\sA_Z(Y)$ must be of pure dimension and  a locally complete intersection over $k$. 
\end{theorem}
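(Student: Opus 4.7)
The plan is to adapt Mustata's argument for jet schemes of locally complete intersections (Proposition 1.4 of \cite{mus2001}) to the relativized setting in which the base $\spec{k}$ is replaced by the auto-arc space $\sA_Z$. The conclusion being local on $Y$, I would first pass to an affine open $V \subset Y$ lying over an affine open $U \subset X$. By Proposition \ref{deformlci}, after possibly shrinking $U$, the deformation $V$ embeds in $\bA_Z^N$ as the common zero locus of exactly $N-d$ polynomials $F_1,\ldots,F_{N-d}$ in $R[x_1,\ldots,x_N]$, where $Z = \spec{R}$ has $k$-length $\ell$.

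Next, fixing a $k$-basis of $R$ and introducing generic arc variables $\wideparen{x}_1,\ldots,\wideparen{x}_N$ expanded against this basis, a direct computation (mirroring those of Section \ref{sec1}) gives
\[
\underline{Hom}_S(Z,\bA_Z^N) \ \cong \ \bA_k^{N\ell}\times_k\sA_Z,
\]
a regular ambient scheme of dimension $N\ell+\delta_Z$. Substituting the $\wideparen{x}_j$ into each $F_i$ and collecting coefficients with respect to the chosen basis of $R$ produces exactly $\ell$ scalar equations per $F_i$, for a grand total of $(N-d)\ell$ equations defining $\underline{Hom}_S(Z,V)$ inside $\bA_k^{N\ell}\times_k\sA_Z$.

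By Krull's Hauptidealsatz, every irreducible component of $\underline{Hom}_S(Z,V)$ has dimension at least
\[
N\ell+\delta_Z - (N-d)\ell \ = \ d\ell+\delta_Z.
\]
Since reduction preserves underlying topological spaces (and hence dimensions), the same bound holds for every component of $\sA_Z(V)$. Combined with the hypothesis $\dim\sA_Z(Y)=d\ell+\delta_Z$, which restricts to $V$, equality must hold on every component, so $\sA_Z(V)$ is pure dimensional. The codimension of $\underline{Hom}_S(Z,V)$ in its smooth ambient space now equals the number of defining equations, so the $(N-d)\ell$ coefficient equations form a regular sequence, and $\underline{Hom}_S(Z,V)$ is a complete intersection, in particular Cohen-Macaulay.

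The step I expect to be most delicate is transferring the locally complete intersection property from $\underline{Hom}_S(Z,V)$ to the reduction $\sA_Z(V)$, since reduction can in principle destroy a complete intersection presentation. The fix is to show that the unreduced scheme is already reduced: by Lemma \ref{etlem}, the open subset $\pi^{-1}(U_{\sm})$ of $\sA_Z(V)$ is \'etale locally isomorphic to $\underline{Hom}_S(Z,U_{\sm})^{\red}\times_k\sA_Z$, which is reduced (as a product of reduced $k$-schemes, noting that $\sA_Z$ is reduced by definition). This furnishes a dense open along which $\underline{Hom}_S(Z,V)$ is generically reduced, and a Cohen-Macaulay scheme that is generically reduced is reduced. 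Hence $\underline{Hom}_S(Z,V)$ coincides with $\sA_Z(V)$, and the complete intersection property passes through automatically.
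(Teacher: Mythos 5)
Your core argument coincides with the paper's: localize so that $Y\subset\bA_Z^N$ is a complete intersection cut out by $N-d$ equations (via Proposition \ref{deformlci}), expand arc variables against a $k$-basis of $R$ to get $(N-d)\ell$ equations inside the ambient $\bA_k^{N\ell}\times_k\sA_Z$ of dimension $N\ell+\delta_Z$, apply Krull to bound every component below by $d\ell+\delta_Z$, and then use the dimension hypothesis to force pure dimensionality and the complete intersection property. Up to that point you reproduce the paper's proof essentially verbatim.

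The additional step you flag as delicate is indeed the delicate point, but your proposed patch does not work. Lemma \ref{etlem} is a statement about $\pi^{-1}(X_{\sm})$ inside $\sA_Z(Y)=\underline{Hom}_S(Z,Y)^{\red}$, i.e., about the \emph{reduction}; knowing an open subset of the reduction is reduced is vacuous and says nothing about the unreduced scheme $\underline{Hom}_S(Z,V)$, so the claimed generic reducedness is circular. In fact the unreduced $\underline{Hom}_S(Z,V)$ maps to $\underline{Hom}_S(Z,Z)$, which the paper itself notes is typically non-reduced (the example $k[t,s]/(t^2,2st)$), so one should not expect $\underline{Hom}_S(Z,V)$ to be reduced, and the Serre-criterion argument cannot close. (You would also need $\pi^{-1}(U_{\sm})$ to be dense, i.e., that no component lies entirely over $X_{\sing}$, which is not available at this stage.) The paper sidesteps this by reading the $(N-d)\ell$ equations directly in the reduced ambient $\bA_k^{N\ell}\times_k\sA_Z$ and asserting the conclusion at that level; your instinct that the passage from the equation count to the lci property of the \emph{reduced} structure needs justification is sound, but the justification you give is not valid.
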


\begin{remark}
We note that Example \ref{defnode} shows that the closed set $\overline{\pi^{-1}(X_{\sm})}$ of $\sA_Z(Y)$ is not necessarily irreducible, and so it will frequently be the case that the dimension of $ \pi^{-1}(X_{\sing})$ is strictly less than  $d\ell +\delta_Z$,  yet $\sA_Z(Y)$ will be reducible. This is in stark contrast with the classical truncated linear arc case as was also pointed out in Remark \ref{redcent} and Remark \ref{bigremark}.
\end{remark}

We will need Theorem \ref{lci} in the proceeding section when we investigate the linear jet case, but it also has another important implication, which we state below. 

\begin{theorem} 
Assume that $X$ is a locally complete intersection variety over an algebraically closed field $k$. Let $Z$ be the object of $\fatpoints{k}$ given by the $n$th jet scheme $J_p^nX$ at some closed point $p$ of $X$. Then, the auto-arc space $\sA_Z$ of $Z$ is a locally complete intersection variety over $k$ whenever $\dim \ \underline{Hom}_S(Z,X)^{\red} = d\ell$. 
\end{theorem}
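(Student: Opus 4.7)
The strategy is to reduce the claim to an application of Theorem \ref{lci}. The natural candidate is to choose $Y$ to be the trivial deformation $Y := X \times_S Z$ of $X$ over $Z$. Since $S = \spec{k}$, the projection $Y \to Z$ is automatically flat (being the pullback of the flat $X \to S$), and $Y \times_Z S \cong X$; moreover Proposition \ref{deformlci} ensures that $Y$ is again a locally complete intersection variety. Thus the hypotheses of Theorem \ref{lci} concerning $X$ and $Y$ are in place.

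The key identification is
\begin{equation*}
\sA_Z(Y) \cong \underline{Hom}_S(Z, X)^{\red} \times_k \sA_Z.
\end{equation*}
At the level of functors, this is a direct consequence of the universal property of $\underline{Hom}$: $\underline{Hom}_S(Z, X \times_S Z)$ represents $T \mapsto \Hom_S(Z \times_S T, X) \times \Hom_S(Z \times_S T, Z)$, which is precisely the functor represented by $\underline{Hom}_S(Z, X) \times_S \underline{Hom}_S(Z, Z)$. Passing to reduced structures is harmless here because $k$ is algebraically closed, hence perfect, so that the tensor product over $k$ of reduced $k$-algebras remains reduced. With this identification in hand, the hypothesis $\dim{\ \underline{Hom}_S(Z, X)^{\red}} = d\ell$ combined with $\delta_Z = \dim{\ \sA_Z}$ yields $\dim{\ \sA_Z(Y)} = d\ell + \delta_Z$, so Theorem \ref{lci} applies and gives that $\sA_Z(Y)$ is a locally complete intersection variety over $k$ of pure dimension $d\ell + \delta_Z$.

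It remains to descend the complete intersection property through the factor $\sA_Z$ of $\sA_Z(Y)$. This is a standard commutative algebra fact: if $R_1 \otimes_k R_2$ is a complete intersection ring with both $R_i$ of finite type over $k$, then each $R_i$ is a complete intersection. Indeed, choosing minimal regular presentations $R_i = A_i / I_i$, the tensor product $R_1 \otimes_k R_2 \cong (A_1 \otimes_k A_2)/(I_1 \otimes A_2 + A_1 \otimes I_2)$ has defining ideal of height equal to the sum of the heights of the $I_i$; when this ideal is generated by a regular sequence of that length, the faithful flatness of $A_1 \otimes_k A_2$ over each $A_i$ forces the generators of each $I_i$ individually to form a regular sequence. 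The main technical point to verify carefully is therefore the interplay between reduction and the fiber product, where the perfectness of $k$ is essential; everything else reduces cleanly to Theorem \ref{lci} and the standard algebra just described.
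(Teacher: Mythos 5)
Your proof is correct, but it takes a genuinely different route from the paper's. The paper does not pass through $\sA_Z(Y)$ at all: it re-runs the equation-counting argument preceding Theorem \ref{lci} directly on $\underline{Hom}_S(Z,X)^{\red}$ (under the hypothesis $\dim \ \underline{Hom}_S(Z,X)^{\red} = d\ell$) to conclude that this generalized jet scheme is itself a locally complete intersection, and then invokes Proposition \ref{yay} to realize $\sA_Z$ as the fiber of the locally complete intersection morphism $\underline{Hom}_S(Z,X)^{\red} \to X$ over the point $p$, concluding by stability of the lci property under base change. You instead feed the trivial deformation $Y = X\times_k Z$ into Theorem \ref{lci} as a black box, using the functorial product decomposition $\sA_Z(X\times_k Z) \cong \underline{Hom}_S(Z,X)^{\red} \times_k \sA_Z$ (valid as you say since $k$ is perfect), and then descend the complete intersection property through the product. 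Both work. The paper's route has the side benefit of showing that $\underline{Hom}_S(Z,X)^{\red}$ is itself lci and avoids any descent; yours keeps Theorem \ref{lci} as the only geometric input but requires the descent step, which is the one place you should be a bit more careful: the cleanest citation is Avramov's theorem that the complete intersection property descends along flat local homomorphisms, applied to the faithfully flat map from the coordinate ring of $\sA_Z$ to that of the product. Your elementary version does go through here because $k$ is algebraically closed, so every closed point of the product is a pair of closed points and Nakayama gives $\mu(I_1^e+I_2^e) = \mu(I_1)+\mu(I_2)$ while heights add, forcing $\mu(I_i)=\operatorname{ht}(I_i)$ for each factor; but as written ("forces the generators of each $I_i$ individually to form a regular sequence") it elides this count, and I would either spell it out or replace it with the citation.
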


\begin{proof}
The key elements of the proof do not change if we apply the adapted argument above to $\underline{Hom}_S(Z,X)^{\red}$ as opposed to $\sA_Z(Y)$. Here, one only needs to assume that the dimension of this generalized arc space is equal to $d\ell$. Now, we may use Proposition \ref{yay}, which states that the space $\sA_Z$  is obtained by a flat base change of $\underline{Hom}_S(Z,X)^{\red} \to X$, which is a locally complete intersection morphism by \cite[\href{https://stacks.math.columbia.edu/tag/09RL}{Tag 09RL}]{stacks-project}. This then implies $\sA_Z$ is a locally complete intersection variety over $k$ by \cite[\href{https://stacks.math.columbia.edu/tag/069I}{Tag 069I}]{stacks-project}. 
\end{proof}

\begin{example} \label{dimun}
As we noted in Example \ref{planearc}, $\sA_Z \cong \sL_{2(n-3)}(X)\times_k\bA_k^7$ when $Z$ is the $(n-1)$th jet of the cuspidal cubic $X$ given $y^2=x^3$ at the origin $O$ for $n \geq 4$. In this case, not only is it clearly seen that $\sA_Z$ is a locally complete intersection (by Proposition 1.4 on page 7 of \cite{mus2001}), but it is also reducible directly by Corollary 4.2 on page 19 of \cite{mus2001}. Note also that $\delta_{(Z,P)}^{*}(X) \geq  e(X,O) = \lim_{n\to\infty} \frac{2n-6+7}{2n-1} = 1$.  As we noted earlier, this behavior generalizes to all plane curve singularities as discussed in \cite{auto}.  \end{example}

\begin{question}
Let $X$ be a locally complete intersection variety  of pure dimension $d$  over an algebraically closed field $k$ and assume that $\dim \ \underline{Hom}_S(Z,X)^{\red} = d\ell$. Let $Z$ be the object $J_p^nW$ of $\fatpoints{k}$ where $W$ is a locally complete intersection variety over $k$. Given a deformation $Y$ of $X$ over $Z$ such that $\dim \ \sA_Z(Y)  = d\ell +\delta_Z$, what is the flat locus of the induced morphism $\theta: \sA_Z(Y) \to \sA_Z$?
\end{question}

\begin{remark}
For any deformation $Y$ of $X$ over $Z$ such that $\dim \ \sA_Z(Y)  = d\ell +\delta_Z$, one can show that the induced morphism $\theta$ is flat by the miracle of flatness (cf. Theorem 23.1 on page 178 of \cite{mat1987}) provided  $\sA_Z$ is regular. 
\end{remark}

\end{subsection}

\section{The situation over linear jets.} \label{sec3}

Now, we will study this problem over the linear jets $Z_n = \spec{k[t]/(t^{n+1})}$. We let $X$ be a variety over $k$. 
We assume that there is a deformation $X_n \to Z_n$. For example, this occurs when $X$ is a complete intersection subvariety of $X=\mathbb{P}_{k}^{N}$ or more generally $X$ is a locally complete intersection in $X$ and the obstruction in $H^1(\sN_X)$ vanishes (cf. Theorem 9.2 on page 74 of \cite{hart2009}). Note then that $\sA_{Z_n}(X_n)$ is the same as the reduced truncated linear arc space $\sL_n(X_n)$.  

\begin{remark}
The auto-arc spaces in this context, hereafter always denoted by $\sL_n(X_n)$, are similar to the truncations $Gr_n(X_n)$ as studied in \cite{loe2003} and more recently in \cite{nic2011} provided the underlying field $k$ has equal characteristic. These later spaces are truncated versions of an infinite arc space, therein denoted by $Gr(\sX)$, where $\sX$ is a smooth formal scheme. \end{remark}

\begin{lemma}
Let $S= \spec{A}$ where $A$ is a reduced Noetherian local ring with residue field $k$. Let $Z_n$ be the object of $\fatpoints{S}$ given by $S \times \spec{\bZ[t]/(t^{n+1})}$. Then, $$\sA_{Z_n} \cong \bA_{S}^n$$
\end{lemma}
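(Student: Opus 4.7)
The plan is to explicitly realize $\underline{Hom}_S(Z_n, Z_n)$ as an affine closed subscheme of $\bA_S^{n+1}$ via its functor of points, then identify the radical of its defining ideal. For any $A$-algebra $B$, a $B$-point of $\underline{Hom}_S(Z_n, Z_n)$ is a $B$-algebra endomorphism $\varphi$ of $B[t]/(t^{n+1})$; such a map is completely determined by where $t$ is sent, say $\varphi(t) = b_0 + b_1 t + \cdots + b_n t^n$, subject to the single nilpotency constraint $\varphi(t)^{n+1} \equiv 0 \pmod{t^{n+1}}$. Expanding this out, $\underline{Hom}_S(Z_n, Z_n)$ is identified with the closed subscheme $V(I) \subset \bA_S^{n+1} = \spec{A[b_0, b_1, \ldots, b_n]}$, where $I$ is generated by the coefficients of $t^0, t^1, \ldots, t^n$ in the expansion of $(b_0 + b_1 t + \cdots + b_n t^n)^{n+1}$.

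The main step is then to show $\sqrt{I} = (b_0)$ in $A[b_0, \ldots, b_n]$. For one inclusion, the coefficient of $t^0$ is exactly $b_0^{n+1}$, so $b_0 \in \sqrt{I}$. For the other inclusion, I would argue via the multinomial expansion: a multi-index $(i_0, \ldots, i_n)$ with $\sum_j i_j = n+1$ and $\sum_j j i_j = k \leq n$ satisfies $k \geq \sum_{j \geq 1} i_j = n+1 - i_0$, forcing $i_0 \geq n+1-k \geq 1$ in every term. Hence the coefficient of each $t^k$ for $0 \leq k \leq n$ is a polynomial every term of which contains a factor of $b_0$, so $I \subseteq (b_0)$. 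Since $A$ is reduced, the polynomial ring $A[b_1, \ldots, b_n] = A[b_0, \ldots, b_n]/(b_0)$ is reduced, so $\sqrt{(b_0)} = (b_0)$, and combining both inclusions yields $\sqrt{I} = (b_0)$.

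Finally, passing to the reduced structure gives $\sA_{Z_n} = \spec{A[b_0, \ldots, b_n]/\sqrt{I}} = \spec{A[b_0, \ldots, b_n]/(b_0)} = \spec{A[b_1, \ldots, b_n]} \cong \bA_S^n$, as desired. The underlying geometric content is transparent: on the reduction any endomorphism of $Z_n$ must fix the unique closed point of $Z_n$, forcing $b_0 = 0$; and once $b_0 = 0$ one has $\varphi(t) = t(b_1 + b_2 t + \cdots + b_n t^{n-1})$, so $\varphi(t)^{n+1}$ is automatically divisible by $t^{n+1}$ and the remaining coordinates $b_1, \ldots, b_n$ are free. The only real obstacle is the multinomial bookkeeping in the second paragraph, but this is routine once one observes the inequality $i_0 \geq n+1-k$.
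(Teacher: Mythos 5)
Your proof is correct. Note, however, that the paper does not actually prove this lemma in situ: its entire ``proof'' is a citation to the first part of Lemma 4.3 of \cite{sto2019}, so there is no internal argument to compare against line by line. Your direct functor-of-points computation --- realizing $\underline{Hom}_S(Z_n,Z_n)$ as $\spec{A[b_0,\dots,b_n]/I}$ with $I$ generated by the coefficients of $(b_0+\cdots+b_nt^n)^{n+1}$ modulo $t^{n+1}$, and then showing $\sqrt{I}=(b_0)$ via the multinomial inequality $i_0\ge n+1-k$ --- is sound, and it correctly isolates the two places where hypotheses are genuinely used: the coefficient of $t^0$ being $b_0^{n+1}$ gives $b_0\in\sqrt{I}$, and the reducedness of $A$ is what makes $(b_0)$ a radical ideal so that the reduction is exactly $\spec{A[b_1,\dots,b_n]}\cong\bA_S^n$ (for non-reduced $A$ one would only get $\bA^n$ over $S^{\red}$). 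The one small point worth being explicit about is the identification of a $B$-point with a $B$-algebra endomorphism of $B[t]/(t^{n+1})$: the paper's functor is written as $\mbox{Hom}_S(Z_n\times_S Y, Z_n\times_S Y)$, and one should say that these are taken as $Y$-morphisms (equivalently, $S$-morphisms $Z_n\times_S Y\to Z_n$), which is the standard convention for generalized jet functors and is clearly what is intended; with that reading your identification, and hence the whole argument, goes through. Your proof has the advantage of making the lemma self-contained rather than deferring to an external reference.
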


\begin{proof} 
This is the first part of Lemma 4.3 on page 141 of \cite{sto2019}.
\end{proof}

Of course then $\sL_n(Z_n) = \sA_{Z_n} \cong \bA_{k}^{n}$ where $Z_n=\spec{k[t]/(t^{n+1})}$ and $k$ is a field.  Thus, the morphism introduced in the beginning of Section \ref{sec2} is of the form
\begin{equation*}
\theta_n : \sL_n(X_n) \to \bA_k^{n}
\end{equation*}

\begin{theorem} \label{main}
Let $X$ be a locally complete intersection variety over an algebraically closed field $k$ of dimension $d$. 
Let $X_n$ be a deformation over $k[t]/(t^{n+1})$ such that $\dim \ \sL_n(X_n) = d(n+1) + n$. Then, the natural induced morphism $\theta_n :  \sL_n(X_n) \to \bA_k^{n}$ is flat. 
\end{theorem}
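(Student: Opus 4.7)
The plan is to apply the miracle of flatness. By the lemma immediately preceding the theorem, $\sA_{Z_n} \cong \bA_k^n$ is regular of dimension $n$, so $\delta_{Z_n} = n$. Since the length of $Z_n$ is $\ell = n+1$, the hypothesis $\dim{\ \sL_n(X_n)} = d(n+1) + n$ is precisely the dimension equality $\dim{\ \sA_Z(Y)} = d\ell + \delta_Z$ required by Theorem \ref{lci}. Invoking that theorem, $\sL_n(X_n)$ is pure dimensional and a locally complete intersection over $k$; in particular, it is Cohen-Macaulay.

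With Cohen-Macaulay source and regular target, the miracle of flatness (Theorem 23.1 of \cite{mat1987}) reduces flatness of $\theta_n$ to verifying that every fiber $\theta_n^{-1}(y)$ has dimension $d(n+1)$. For the lower bound, let $x$ be a closed point of $\sL_n(X_n)$ with $y = \theta_n(x)$; by pure-dimensionality, $\dim{\ \mathcal{O}_{\sL_n(X_n), x}} = d(n+1) + n$, and $\dim{\ \mathcal{O}_{\bA_k^n, y}} = n$, so the standard local ring inequality $\dim{\ B} \leq \dim{\ A} + \dim{\ B/\mathfrak{m}_A B}$ forces the fiber to have local dimension at least $d(n+1)$ at $x$.

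For the upper bound I would locally embed $\sL_n(X_n)$ as a complete intersection in $\bA_k^{N(n+1)} \times \bA_k^n$ cut out by the $r := (N-d)(n+1)$ equations obtained from arc expansion of a local LCI presentation $X_n = V(F_1, \ldots, F_{N-d}) \subset \bA_{Z_n}^N$, guaranteed by Proposition \ref{deformlci}. The projection $\theta_n$ is the standard coordinate projection, so the fiber over $y$ is cut out in $\bA_k^{N(n+1)}$ by the specializations of the same $r$ equations; generic fibers therefore have the expected dimension $d(n+1)$ from the global LCI structure, and the locus in $\bA_k^n$ where fiber dimension jumps is closed. The main obstacle is to rule out a nonempty such bad locus. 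The proposed route is to show that every irreducible component of $\sL_n(X_n)$ dominates $\bA_k^n$: a nondominant component of full dimension $d(n+1) + n$ with image of dimension $m < n$ in $\bA_k^n$ would have generic relative fiber of dimension at least $d(n+1) + (n - m) > d(n+1)$, contradicting the identification of such fibers---via Lemma \ref{basiclem} at the trivial endomorphism and its analogue at nontrivial endomorphisms---with reduced auto-arc spaces of LCI deformations of $X$ whose dimension is controlled by $d(n+1)$ in the regime set by the hypothesis. Once every component dominates, Cohen-Macaulay-ness together with upper semi-continuity of fiber dimension forces equidimensional fibers of dimension $d(n+1)$, and flatness of $\theta_n$ follows from the miracle of flatness.
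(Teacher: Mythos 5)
Your overall strategy coincides with the paper's: use Theorem \ref{lci} to conclude that $\sL_n(X_n)$ is pure dimensional and a locally complete intersection, hence Cohen--Macaulay, and then apply the miracle of flatness over the regular base $\bA_k^n$. The paper disposes of the remaining hypothesis of Matsumura's theorem --- that \emph{every} fiber of $\theta_n$ has dimension exactly $d(n+1)$ --- with the single assertion that the fibers ``are clearly seen to be constant,'' whereas you attempt to prove it. You have correctly identified this as the step that actually carries the content, but your argument for it has a genuine gap.

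The lower bound via $\dim B \le \dim A + \dim (B/\mathfrak{m}_A B)$ is fine. The upper bound is not established. Showing that every irreducible component of $\sL_n(X_n)$ dominates $\bA_k^n$ controls only the \emph{generic} fiber dimension; upper semicontinuity permits the fiber dimension to jump on a proper closed subset of the base, and neither Cohen--Macaulayness of the source nor dominance of components rules this out (consider $\bA_k^2 \to \bA_k^2$, $(x,y)\mapsto (x,xy)$: regular source, dominant, yet the fiber over the origin is one-dimensional and the map is not flat). The critical point is $O$: by Lemma \ref{basiclem} the fiber $\theta_n^{-1}(O)$ is the classical jet scheme $\sL_n(X)$, and the hypothesis $\dim \sL_n(X_n) = d(n+1)+n$ bounds its dimension only by $d(n+1)+n$, not by $d(n+1)$. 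For a locally complete intersection $X$ every component of $\sL_n(X)$ has dimension at least $d(n+1)$, but the total dimension can be strictly larger (by Proposition 1.4 of \cite{mus2001}, equality is equivalent to $\sL_n(X)$ being pure dimensional, which fails for sufficiently bad singularities). Your appeal to fibers being ``controlled by $d(n+1)$ in the regime set by the hypothesis'' is precisely the claim that needs proof, and the closing sentence that Cohen--Macaulayness together with upper semicontinuity ``forces equidimensional fibers'' is false as stated. Until you show $\dim \theta_n^{-1}(y) \le d(n+1)$ for every $y$ --- in particular that the hypothesis on $\dim \sL_n(X_n)$ forces $\dim \sL_n(X) = d(n+1)$ --- the miracle of flatness cannot be invoked. (The same issue is present, unaddressed, in the paper's own one-line treatment of this step.)
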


\begin{proof}
The fibers of $\theta_n$ are clearly seen to be constant. Then, by using the miracle of flatness (Theorem 23.1 on page 178 of \cite{mat1987}), we know that $\theta_n$ is flat if and only if $\sL_n(X_n)$ is a Cohen-Macaulay ring. This is true since under our assumptions $\sL_n(X_n)$ is a  locally complete intersection by Theorem \ref{lci}. 
 \end{proof}

\begin{corollary} \label{hilbcor}
Given the conditions of Theorem \ref{main}, and moreover, assume that $X \subset \bA_{k}^{N}$. Then, the fibers of $\theta_n$ give rise to a flat morphism $$\tilde\theta_n: \bA_k^{n} \to \underline{Hilb}(\bA_{k}^M)$$
where $M = (n+1)\cdot N$ and the image at the origin $O$ given by $\tilde\theta_n(O)$ is $\sL_n(X)$. 
\end{corollary}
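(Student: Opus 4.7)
The plan is to apply the universal property of the Hilbert scheme to the flat family constructed in Theorem \ref{main}. Three ingredients are needed: an explicit closed embedding of $\sL_n(X_n)$ into $\bA_k^M \times_k \bA_k^n$ realising $\theta_n$ as a projection, the flatness of that projection, and an identification of the fiber over the origin with $\sL_n(X)$.

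First I would make the embedding explicit. Since $X \subset \bA_k^N$ and $X_n$ is a deformation of $X$ over $Z_n = \spec{k[t]/(t^{n+1})}$, there is a closed immersion $X_n \hookrightarrow \bA_{Z_n}^N$. A point of $\sL_n(X_n) = \underline{Hom}_S(Z_n,X_n)^{\red}$ is represented by a morphism $Z_n \to X_n$; post-composing with the embedding records $N$ truncated power series in $t$ modulo $t^{n+1}$, i.e.\ $(n+1)N = M$ affine coordinates, together with the induced endomorphism of $Z_n$, which is a point of $\sA_{Z_n} \cong \bA_k^n$. Carrying this out functorially produces a closed immersion $\sL_n(X_n) \hookrightarrow \bA_k^M \times_k \bA_k^n$ under which $\theta_n$ is precisely projection onto the second factor.

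Next, Theorem \ref{main} tells us that $\theta_n$ is flat, so the embedding presents $\sL_n(X_n) \to \bA_k^n$ as a flat family of closed subschemes of $\bA_k^M$ over the connected base $\bA_k^n$. Flatness over a connected base forces the Hilbert polynomial $P$ of the fibers to be constant. The universal property of the component $\underline{Hilb}^{P}(\bA_k^M)$ then produces a unique classifying morphism $\tilde\theta_n : \bA_k^n \to \underline{Hilb}(\bA_k^M)$ for which the pullback of the universal family coincides with $\sL_n(X_n) \to \bA_k^n$; the flatness assertion for $\tilde\theta_n$ is exactly the statement that this pulled-back family is flat, which is Theorem \ref{main} itself. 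For the identification of $\tilde\theta_n(O)$, note that the origin of $\sA_{Z_n}$ corresponds to the trivial endomorphism of $Z_n$, so by Lemma \ref{basiclem} the fiber $\theta_n^{-1}(O)$ equals $\underline{Hom}_S(Z_n,X)^{\red} = \sL_n(X)$, whence $\tilde\theta_n(O) = [\sL_n(X)]$.

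The chief obstacle is a technical one: the Hilbert functor is ordinarily developed for projective or at least quasi-projective targets, while $\bA_k^M$ is affine. I would handle this either by passing to a projective compactification $\bP_k^M$ and observing that the supports of the fibers remain in the affine chart, so that the classifying morphism factors through an open subscheme of $\underline{Hilb}^{P}(\bP_k^M)$, or else by appealing directly to Grothendieck's Quot/Hilbert formalism for quasi-projective schemes, which applies verbatim once $P$ is fixed. Either route reduces the corollary to a routine application of representability.
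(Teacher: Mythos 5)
The paper's entire proof of this corollary is the single sentence ``This is immediate,'' so your write-up is essentially an unpacking of what the author takes for granted: embed $\sL_n(X_n)$ in $\bA_k^M\times_k\bA_k^n$ so that $\theta_n$ becomes the second projection, quote the flatness from Theorem \ref{main}, invoke the universal property of the Hilbert functor to get the classifying map $\tilde\theta_n$, and identify $\tilde\theta_n(O)$ with $\sL_n(X)$ via Lemma \ref{basiclem} (with $Z=Z_n$ linear, $\underline{Hom}_S(Z_n,X)^{\red}=\sL_n(X)$). All of that is correct and is surely the intended argument; you also correctly read the word ``flat'' in the statement as referring to the family being classified rather than to flatness of $\tilde\theta_n$ as a morphism of schemes.

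The one place where your argument would actually fail is the final paragraph, where you dispose of the fact that $\bA_k^M$ is not projective. The fibers $\theta_n^{-1}(a)$ are positive-dimensional affine varieties (e.g.\ $\theta_n^{-1}(O)=\sL_n(X)$ has dimension $d(n+1)$), hence they are not proper over $k$. Consequently their scheme-theoretic closures in $\bP_k^M$ necessarily meet the hyperplane at infinity, so it is false that ``the supports of the fibers remain in the affine chart,'' and the locus of $\underline{Hilb}^P(\bP_k^M)$ parametrizing subschemes contained in the affine chart contains only zero-dimensional subschemes. Likewise, Grothendieck's Hilbert functor for a quasi-projective target represents only families of closed subschemes that are \emph{proper} over the base, so it does not apply ``verbatim'' here. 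To make this step honest one must either pass to the family of projective closures in $\bP_k^M\times_k\bA_k^n$ and verify that this closure is still flat over $\bA_k^n$ (for instance by checking constancy of the Hilbert polynomials of the homogenized ideals, which does not follow formally from flatness of the affine family), or else interpret $\underline{Hilb}(\bA_k^M)$ as some non-standard object. The paper itself is silent on this point, so you have not diverged from its argument --- but the fix you propose is not one that works as stated.
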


\begin{proof} This is immediate. \end{proof}
%%%%--------its not clear the below results belong here! and it seems i need to add conditions

\section{Remarks on linear auto-arcs for curves.}\label{sec4}

In this section, we briefly study the linear auto-arcs for deformations of curves. Even in this case, the situation is highly non-trivial. 

\begin{proposition} \label{curvecor}
Let $C$ be a curve over a field $k$, and let $C_1$ be a deformation over $\spec{k[t]/(t^2)}$ such that $T(C_1)$ is irreducible and of pure dimension. Then, the inverse image $\pi^{-1}(C_{\sing})$ of the singular locus under the morphism $\pi: T(C_1) \to C$ is always a subvariety of the fiber of the morphism $\theta_2 : T(C_1) \to \bA_k^1$ at the origin.  
\end{proposition}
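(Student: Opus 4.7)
My plan is to proceed by contradiction via a dimension count. First I will establish that $\dim T(C_1) = 3$ using the irreducibility hypothesis; then I will show that the existence of any $v \in \pi^{-1}(C_{\sing})$ with $\theta_2(v) \neq 0$ would force the fiber of $\pi$ over $p = \pi(v)$ to have dimension at least $3$, which by irreducibility would coincide with all of $T(C_1)$, contradicting surjectivity of $\pi$ onto $C$.

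To pin down $\dim T(C_1)$, I will restrict to the smooth locus $C_{\sm}$, which is open and dense in the reduced curve $C$. By the rigidity of smooth schemes as recalled in Example \ref{fibex}, $C_1$ is \'{e}tale locally isomorphic to $C_{\sm} \times_k D$ over $C_{\sm}$. Hence $\pi^{-1}(C_{\sm})$ is \'{e}tale locally isomorphic to $T(C_{\sm}) \times_k T(D)$, which has dimension $2 + 1 = 3$ (since $T(C_{\sm})$ is the rank-one tangent bundle of a smooth curve and $T(D) \cong \bA_k^1$ by Lemma \ref{surj}). As $T(C_1)$ is irreducible and this open is non-empty, $\dim T(C_1) = 3$.

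Next, following the setup in the proof of Lemma \ref{surj}, I will work locally, embedding $C_1 \hookrightarrow \bA_k^N \times D$ as $V(F_i)$ with $F_i = G_i + tH_i$, so $C = V(G_i) \subset \bA_k^N$. Substituting arc variables $\wideparen{x}_j = y_j + z_j t$ and $\wideparen{t} = e + ft$ and taking the reduced structure (which forces $e = 0$) cuts out $T(C_1) \subset \bA_k^{2N+1}$ by
\begin{equation*}
G_i(y) = 0, \qquad \sum_j z_j \partial_j G_i(y) + f H_i(y) = 0.
\end{equation*}
Consequently, for any $p \in C$, the fiber $\pi^{-1}(p)$ is the kernel in $\bA_k^{N+1}$ of the linear map $(z, f) \mapsto J(p) z + f H(p)$, of dimension $N + 1 - \operatorname{rank}[J(p) \mid H(p)]$. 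Now suppose some $v \in \pi^{-1}(p)$ with $p \in C_{\sing}$ satisfies $\theta_2(v) = a \neq 0$. Then $H(p) \in \IM(J(p))$, so $\operatorname{rank}[J(p) \mid H(p)] = \operatorname{rank} J(p)$; and since $p$ is a singular point of a curve in $\bA_k^N$, the Jacobian drops strictly below its generic rank $N-1$, giving $\operatorname{rank} J(p) \leq N - 2$. Therefore $\dim \pi^{-1}(p) \geq 3 = \dim T(C_1)$, which by irreducibility forces $\pi^{-1}(p) = T(C_1)$ set-theoretically. But this contradicts surjectivity of $\pi$ onto $C$, since the trivial lift $D \to \spec{\kappa(q)} \to C_1$ provides a preimage in $T(C_1)$ for every closed point $q$ of $C$.

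The main obstacle is securing $\dim T(C_1) = 3$ cleanly in the presence of possibly high-embedding-dimension singularities on $C$, where a naive fiber-dimension count at singular points could suggest higher values. It is precisely irreducibility that allows the dimension of the dense open $\pi^{-1}(C_{\sm})$ to dominate; once that is in hand, the Jacobian criterion for smoothness together with irreducibility yield the contradiction immediately.
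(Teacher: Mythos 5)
Your proof is correct, but it takes a genuinely different route from the paper's. The paper argues globally on $V=\pi^{-1}(C_{\sing})$: it notes $\dim V\le 2$ by irreducibility, assumes $\theta_2|_V$ is non-constant, invokes Theorem \ref{LiuFlat} to get flatness of $\theta_2|_V$ over $\bA_k^1$ and hence $\dim V_0\le 1$ for the central fiber $V_0$, and then contradicts this using Lemma \ref{basiclem} (which identifies $\theta_2^{-1}(O)$ with $T(C)$) together with the fact that the Zariski tangent space at a singular point of a curve has dimension at least $2$. You instead argue pointwise: you compute the fiber of $\pi$ over a closed point $p$ as the kernel of the linear map $(z,f)\mapsto J(p)z+fH(p)$, observe that a point of $\pi^{-1}(C_{\sing})$ with $\theta_2\ne 0$ forces $H(p)$ into the column span of $J(p)$, and then use the Jacobian criterion ($\operatorname{rank}J(p)\le N-2$ at a non-smooth point of a curve) to push $\dim\pi^{-1}(p)$ up to $3=\dim T(C_1)$, which is absurd. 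Both proofs are dimension counts resting on the same underlying fact --- the tangent-space jump at a singular point --- but yours is more elementary and self-contained: it avoids Lemma \ref{basiclem}, avoids the slightly delicate step where the paper applies ``dense image implies flat'' to the possibly reducible $V$, and makes explicit exactly which linear-algebraic degeneration is being exploited. The trade-off is that your computation is tied to first-order jets and an affine embedding, whereas the paper's flatness argument is the template that is reused essentially verbatim for the order-$n$ statement about weak deformations later in Section \ref{sec4}. Two small points to tighten: your claim $\dim T(C_1)=3$ needs $C_{\sm}\ne\emptyset$ (automatic for a reduced curve over a perfect field, and implicitly assumed by the paper as well), and the identity $\dim T_pC = N-\operatorname{rank}J(p)$ requires that the $G_i$ generate the ideal of $C$ at $p$, which you correctly get for free from flatness of the deformation and reducedness of $C$.
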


\begin{proof}
When $C$ is non-singular the statement is trivial. Therefore, assume that $C$ is singular and let $V:= \pi^{-1}(C_{\sing})$. The decomposition in Equation \ref{dimauto} implies $\dim \ V \leq 2$ since $T(C_1)$ is irreducible and of pure dimension. Consider the restriction $\theta_1|_V : V \to \bA_k^1$ and assume for the sake of contradiction that it is non-constant. This implies that $\theta_1|_V(V)$ is a dense subset of $\bA_k^{1}$, from whence it follows from Theorem \ref{LiuFlat} that the restriction $\theta_1|_V$ is also flat. But, then the central fiber $V_0$ of $\theta_1|_V:V\to \bA_k^1$ over the origin is such that $\dim \ V_0 = \dim \ V  - 1$. Thus, $\dim \ V_0  \leq 1 $. 

By Lemma \ref{basiclem}, the central fiber of $\theta_2 : C_1 \to \mathbb{A}_k^1$ is isomorphic to $T(C)$, which by Commutative Diagram \ref{maindiag} implies that $V_0$ is the inverse image over the singular locus of the natural morphism $T(C) \to C$. But, at any singular point $p \in C$, the fiber under $T(C) \to C$, which is just the tangent space $T_p(C)$, has dimension strictly greater than $1$, which is a contradiction. Thus, $\theta_1|_V$ is a constant morphism and thus $V_0 = V$, which proves the claim. 
\end{proof}
\begin{remark} \label{bigremark}
Example \ref{defnode} shows that although the conditions for Proposition \ref{curvecor} are sufficient, they are not necessary - i.e., the reduced tangent bundle of the versal first order deformation of a node (given by $xy-t = 0$  with $t^2 =0$) is reducible, yet the central fiber of $\theta$ still contains the inverse image of the singular locus. 
\end{remark}

\begin{example}\label{dnode3}
We lift the deformation of Example \ref{defnode} to the second order deformation $N_2$ defined by 
\begin{equation*}
N_2: = \spec{k[x,y,t]/(xy-t^2-t, t^3)}
\end{equation*}
we create arc variables 
\begin{equation*}
\wideparen{x} = a_{11} + a_{12}t + a_{13}t^2 , \quad 
\wideparen{y} = a_{21} + a_{22}t + a_{23}t^2 ,\quad
\wideparen{t} = e + ft+gt^2
\end{equation*} 
 We note that $\wideparen{t}^3  = 0$ implies that $e = 0$ and places no further restrictions on $f$ and $g$, and so without loss of generality we may assume $\wideparen{t} = ft+gt^2$. Now, performing the remaining substitution, we have the arc equation $\wideparen{x}\wideparen{y}-\wideparen{t}^2-\wideparen{t}=0$ which generates the following list of equations defining $\sL_3(N_3)$. 
 \begin{equation*}
 \begin{split}
 a_{11}a_{21}  &= 0 \\
 a_{11}a_{22} + a_{12}a_{21} - f &= 0\\
 a_{11}a_{23}+a_{21}a_{13}+a_{12}a_{22} - g - f^2  &= 0 
 \end{split}
 \end{equation*}
 We note that the fiber over the origin $O$ of the natural morphism $\pi_2 : \sL_2(N_2) \to N$ is cut out by $a_{11} = a_{21} = 0 $.  This then implies that on this fiber, the variables $a_{13}$ and $a_{23}$ are free and $f=0$ . Thus, as a subvariety of $\sL_2(N_2)$, the fiber over the singular point in $N$ is given by $$\pi_2^{-1}(O) \cong \spec{k[x,y,g]/(xy - g)}\times_k \bA_{k}^2$$
 We note that $\dim \ \pi_2^{-1}(O) = 4$ and an irreducible component. Also, we note that the fiber over the singular locus leaves the central fiber of $\theta_2 : \sL_2(N_2) \to \bA_k^2$. 
\end{example}

\begin{remark} \label{rm3}
In light of Proposition \ref{curvecor}  and Example \ref{dnode3}, we consider the fiber of the singular locus for an irreducible curve such that $\sL_2(X_2)$ is irreducible and of pure dimension. Let $V:=\pi_2^{-1}(X_{\sing})$. By assumption, $\dim \ V < \dim \ \sL_2(X_2) = 5$. Assume $\theta_2|_V: V \to \bA_k^2$ is not constant, and let $L$ be a line on $\bA_k^2$ passing through the origin and contained in the image of $\theta_2|V$. By a change of coordinates if necessary, we can find a surjective map $\bA_k^2 \to L$ and consider the composition with $\theta_2|_{V}$ whose image will be dense and hence the composition will be flat. Therefore, the fiber at the origin of this composition, say $V_0$ will have dimension strictly less than $4.$ 

By Lemma 4.1 on page 18 of \cite{mus2001},  the dimension of any fiber over the singular locus is equal to $3$ or more. Thus, the dimension of $V_0$ must be exactly $3$ in this case. Although it may not be contained in the central fiber of $\theta$, we do have a picture for what the expected dimension should be. 
\end{remark}

We attempt to generalize the behavior noticed in Proposition \ref{curvecor} and Remark \ref{rm3}. For this, we let $\sA_Z^{*}$ denote the open subscheme of $\sA_Z$ isomorphic to $\underline{Aut}_S(Z)^{\red}$ for $Z$ an object in $\fatpoints{S}$, and we let $B_Z$ denote the compliment $\sA_Z \setminus \sA_Z^*$. Considering the behavior above and in that of Corollary \ref{curvecor}, we make the following definition.

\begin{definition}
Let $X$ be a scheme over another scheme $S$ and let $Z$ be an object in $\fatpoints{S}$. Let $Y$ be a deformation of $X$ over $Z$. Consider the natural induced morphism $\pi : \sA_Z(Y) \to X$ and let $V = \pi^{-1}(X_{sing})$. 
We say that the deformation $Y \to Z$ is {\it strong} if $V$ is contained in $\theta^{-1}(B_Z)$ and otherwise we call the deformation  {\it weak}. We say that the deformation is {\it very strong} if $V \subset \theta^{-1}(O)$ where $O$ is the point given by the trivial endomorphism of $Z$. 
\end{definition}

\begin{remark} In the case of deformations over $k[t]/(t^{n+1})$ and the corresponding truncated linear arc spaces, the notions of strong and very strong are equivalent, and for this reason, we will always refer to a very strong deformation as merely strong in this case. 
\end{remark}

\begin{remark}
We can see that the first order deformation in Example \ref{defnode} is a strong deformation, yet its second order cousin found in Example \ref{dnode3} is a weak deformation. 
\end{remark}

In general, for a strong deformation, the induced morphism $\pi\times \theta : \sA_Z(Y) \to X \times_S\sA_Z$ is then a piecewise trivial fibration away from the $\theta^{-1}(\sA_S^*)$ over the base $X_{\sm} \times_S \sA_Z^{*}$. Thus, for a strong deformation for example, we have a commutative diagram
\[\begin{tikzcd}
\sA_Z(Y)\setminus \theta^{-1}(B_Z) \arrow[d,] \arrow[r,hook] & \sA_Z(Y) \arrow[d,"{\pi|_{V}\times\theta}" left]  \arrow[r] \arrow[rd, "{\pi\times\theta}" description ]&\underline{Hom}_S(Z,X)^{\red}\times_S \sA_Z  \arrow[d] \\
X_{\sm}\times_S\sA_Z^* \arrow[r, hook] &X_{\sm}\times_S \sA_Z  \arrow[r, hook] &X\times_S\sA_Z 
\end{tikzcd}\]
where the left most vertical arrow is a piecewise trivial fibration with affine fibers onto the base $X_{\sm} \times \sA_Z^{*}$.

In the case of strong $n$th order deformation over linear jets, the above diagram simplifies to 
\[\begin{tikzcd}
\sL_n(X_n)\setminus \theta^{-1}(O) \arrow[d,] \arrow[r,hook] & \sL_n(X_n) \arrow[d,"{\pi|_{V}\times\theta}" left]  \arrow[r] \arrow[rd, "{\pi\times\theta}" description ]&\sL_n(X)\times_S \bA_k^n \arrow[d] \\
X_{\sm}\times_k\bG_{m}\times_k\bA_k^{n-1}\arrow[r, hook] &X_{\sm}\times_S \bA_k^n \arrow[r, hook] &X\times_S \bA_k^n
\end{tikzcd}\]
where $\bG_{m}$ is the general multiplicative group over $k$.

\begin{proposition}
Let $C$ be a curve over a field $k$. Consider a weak $n$th order deformation $C_n$ of $C$ over $\spec{k[t]/(t^{n+1})}$ such that $\sL_n(C_n)$ is irreducible and of pure dimension. Then, $n+2\leq \dim \ \pi_n^{-1}(C_{\sing}) \leq 2n$.
\end{proposition}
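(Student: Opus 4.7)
For the upper bound, I would simply invoke Equation~\ref{bediteq1}: since $\sL_n(C_n)$ is irreducible and of pure dimension, taking $d=1$, $\ell(Z_n)=n+1$, and $\delta_{Z_n}=n$ forces $\dim \sL_n(C_n) = 2n+1$. Because $C_{\sm}$ is nonempty, the subvariety $V = \pi_n^{-1}(C_\sing)$ is a proper closed subvariety of the irreducible $\sL_n(C_n)$, so the paragraph following Equation~\ref{dimauto} yields $\dim V \leq 2n$.

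The lower bound is the main content, and here my plan is to adapt the argument of Proposition~\ref{curvecor} ``in reverse''. Since the deformation is weak, I can find a singular point $p \in C_\sing$ and an irreducible component $V' \subset V_p := \pi_n^{-1}(p)$ with $\theta_n(V') \not\subset \{O\}$; setting $W' := \overline{\theta_n(V')}$ gives an irreducible subvariety of $\bA_k^n$ of dimension at least $1$. Applying the fiber dimension theorem to the dominant morphism $\theta_n|_{V'}\colon V' \to W'$ then yields $\dim V' = \dim W' + \dim F_{\mathrm{gen}}$, where $F_{\mathrm{gen}}$ is the generic fiber. It will therefore suffice to prove $\dim F_{\mathrm{gen}} \geq n+1$, from which $\dim V \geq \dim V' \geq 1 + (n+1) = n+2$ follows. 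Under the standing assumption that $C$ is locally complete intersection, Theorem~\ref{main} gives that $\theta_n$ is flat of relative dimension $n+1$, so every fiber $\theta_n^{-1}(\psi)$ is a locally complete intersection of pure dimension $n+1$ and $\theta_n^{-1}(W')$ has pure dimension $\dim W' + (n+1)$; it will therefore be enough to exhibit $V'$ as a whole irreducible component of $\theta_n^{-1}(W')$.

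To exhibit $V'$ as such a component I would argue in two stages. First, at the central fiber $\psi = O$, I will use that $V' \cap \theta_n^{-1}(O) = V' \cap \sL_n(C)_p$ contains a full-dimensional piece of the Mustata fiber of dimension $n+1$ by Lemma~4.1 of \cite{mus2001}. Second, I would propagate this bound to generic $\psi \in W'$ via the $G$-equivariance of $\theta_n$ under the right action of $G := \sA_{Z_n}^{*} \cong \bG_m \times \bA_k^{n-1}$, which preserves fiber dimensions along $G$-orbits. The main obstacle is that, as Example~\ref{dnode3} illustrates, $W'$ may lie entirely inside the non-invertible locus $B_{Z_n}$ -- outside every positive-dimensional $G$-orbit through $O$ -- so the $G$-action alone does not reach a dense subset of $W'$. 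To close the argument one would therefore also have to exploit the larger monoid action of $\sA_{Z_n}$ on $\bA_k^n$ by right composition together with the flatness of $\theta_n$, carefully controlling how non-invertible endomorphisms can a priori collapse fiber dimensions, in order to verify that the lower bound $\dim F_{\mathrm{gen}} \geq n+1$ persists to the generic point of $W'$.
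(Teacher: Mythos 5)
Your upper bound is correct and coincides with the paper's: Equation \ref{bediteq1} forces $\dim \sL_n(C_n)=2n+1$, and $V=\pi_n^{-1}(C_{\sing})$ is a proper closed subset of the irreducible $\sL_n(C_n)$, so $\dim V\leq 2n$. The lower bound, however, has a genuine gap that you yourself flag but do not close: everything hinges on showing $\dim F_{\mathrm{gen}}\geq n+1$ for the \emph{generic} fiber of $\theta_n|_{V'}$, and neither of your routes delivers this. Exhibiting $V'$ as a whole irreducible component of $\theta_n^{-1}(W')$ is unjustified and typically false: $V'$ sits inside $\pi_n^{-1}(p)$, which is carved out of $\theta_n^{-1}(W')$ by the further equations forcing the arc to be centered at $p$, so $V'$ is in general a proper, positive-codimension subvariety of any component of $\theta_n^{-1}(W')$ containing it. The equivariance fallback also fails for the reason you state ($W'$ may lie entirely in $B_{Z_n}$, outside every positive-dimensional orbit of $\sA_{Z_n}^{*}$ through $O$). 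More fundamentally, the whole strategy runs against semicontinuity: fiber dimension can only jump \emph{up} at special points, so largeness of the fiber over $O$ gives no lower bound at the generic point of $W'$. You also import hypotheses the proposition does not make ($C$ locally complete intersection, plus the dimension hypothesis of Theorem \ref{main}).

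The paper's argument goes in exactly the opposite direction and avoids all of this by bounding the \emph{special} fiber. Since the deformation is weak, $\theta_n|_V$ is nonconstant, so after composing with a suitable linear projection $\bA_k^n\to\bA_k^1$ one obtains a dominant morphism $V\to\bA_k^1$ whose fiber over $0$ has dimension $\dim V-1$ (via Theorem \ref{LiuFlat}, as in Proposition \ref{curvecor}). That fiber contains $V\cap\theta_n^{-1}(O)$, which by Lemma \ref{basiclem} is the inverse image of $C_{\sing}$ in the classical jet scheme $\sL_n(C)$ and hence has dimension at least $n+1$ by Lemma 4.1 of \cite{mus2001}. Therefore $n+1\leq\dim V-1$, i.e.\ $\dim V\geq n+2$. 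Mustata's bound is thus applied where it is actually available --- over the origin --- and the only fact needed about $\theta_n$ is that its restriction to $V$ dominates a line; no flatness of $\theta_n$, no group action, and no lci hypothesis enter. If you want to salvage your approach, you would need an independent lower bound on the fibers of $\theta_n|_{V'}$ away from $O$, which is precisely the hard content you have deferred.
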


\begin{proof}
The proof will be exactly the same as before, and therefore we will just sketch the proof here. If $C$ is smooth, then there is nothing to prove. Therefore, assume $C$ is singular and let $V = \pi_n^{-1}(C_{\sing})$ where $\pi_n : \sL_n(C_n) \to C$ is the natural morphism. Let $\dim \ V =m  \leq 2n$ since $ \sL_n(C_n)$ is of pure dimension and irreducible.  Assume for now that $\theta_n|_V$ is not constant, then, by a linear change of coordinates if necessary, we can restrict the target of $\theta_n|_V$ to a copy of the affine line for which the image is dense. Thus, if we let $V_0 \subset \sL_n(C)$ be the fiber over the origin, $\dim \ V_0 \leq m - 1$. For any singular point $p$ on $C$, $\dim \ T_pC \geq 2$, and therefore, by Lemma 4.1 on page 18 of \cite{mus2001}, $\dim \ V_0\geq n+1$. Actually, equality is obtained in our case, so we simply solve for $m$ to obtain the lower bound. \end{proof}

\begin{example}
Consider the $2$nd order versal deformation $C_2$ of the node given by $xy - t$, with $t^3 =0$ over a field $k$. Following the computation in Example \ref{dnode3}, we obtain in exactly the same way the list of equations  
 \begin{equation*}
 \begin{split}
 a_{11}a_{21}  &= 0 \\
 a_{11}a_{22} + a_{12}a_{21} - f &= 0\\
 a_{11}a_{23}+a_{21}a_{13}+a_{12}a_{22} - g   &= 0 
 \end{split}
 \end{equation*}

These equations define $\sL_2(C_2)$ as a subvariety of $\bA_k^8$. Clearly, this is reducible with two irreducible components $W_i$ given by $a_{i1} =0$ for $i = 1, \ 2$. Each irreducible component maps in a natural way surjectively onto $\spec{k[f]}$, and is thus flat over $\spec{k[f]}$. The fiber at $f=0$ of  each component $W_{i}$ is isomorphic to $T(N_1)\times_k\bA_k^1$ where the space $T(N_1)$ studied in Example \ref{defnode}. The affine factor is coming from a free variable: $a_{23}$ is free in the case $i=1$ and $a_{13}$ is free in the case that $i=2$.  Furthermore,  we noted in Example \ref{defnode} that $T(N_1)$ is of dimension $3$. Thus, $\sL_2(C_2)$ is of pure dimension since each irreducible component must be of dimension $5$. One can now also quickly check that $\pi_2^{-1}(O) \cong \spec{k[x,y,t]/(xy-t)} \times_k \bA_k^2$, and thus $\dim \ \pi_2^{-1}(O) = 4$. 
\end{example}

\begin{example}
Consider the $2$nd order deformation $X_2$ of the node $N$ given by $xy = t^2(x+y)$, with $t^3 =0$ over a field $k$. The equations defining $\sL_2(X_2)$ are  
 \begin{equation*}
 \begin{split}
 a_{11}a_{21}  &= 0 \\
 a_{11}a_{22} + a_{12}a_{21}  &= 0\\
 a_{11}a_{23}+a_{21}a_{13}+a_{12}a_{22} - f^2(a_{11}+a_{21})   &= 0 
 \end{split}
 \end{equation*}
The fiber over the singular locus $\pi_2^{-1}(O)$ is the fiber product of the inverse image over the singular locus in $\sL_2(N)$, which is isomorphic to $N\times_k\bA_k^2$, and another copy of the affine plane given by $\spec{k[f,g]}$. This example is as far away from being strong as possible  for a locally complete intersection in that the fiber over the singular locus is an irreducible component with $ \dim \ \pi_2^{-1}(O) =  \dim \ \sL_2(N_2) =5$.
\end{example}

\begin{example}
Consider the cusp $C = \spec{k[x,y]/(y^2 -x^3)}$ and the $3$rd order versal deformation $$C_3 = \spec{k[x,y,t]/(y^2 - x^3 - t , t^4)}$$
In exactly the same manner as the previous calculations, we obtain
\begin{equation*}
\begin{split}
a_{21}^2 - a_{11}^3 &= 0\\
2a_{21}a_{22} - 3a_{11}^3a_{12} - f & = 0 \\
a_{22}^2 + a_{21}a_{23} - 3a_{11}a_{12}^2 - 2a_{11}^2a_{13} - g &= 0 \\
a_{21}a_{24} + 2a_{22}a_{23} - 2a_{11}^2a_{14} - 5 a_{11}a_{12}a_{13} - a_{12}^3 - h &= 0 
\end{split}
\end{equation*}
as equations for the auto-arc space $\sL_3(C_3)$ in $\bA_{k}^{11}$. The fiber $\pi_3^{-1}(O)$ is given by the ideal $I = (a_{22}^2 - g, 2a_{22}a_{23} - a_{12}^3 - h, a_{11}, a_{21}, f)$ from which one sees that $a_{13}, a_{14},$ and $a_{24}$ are free. Thus, there is an isomorphism
$$\pi_3^{-1}(O) \cong \spec{k[x,y,z,t,s]/ (y^2 - t, 2yz - x^3 - s)}\times_k \bA_k^3\cong \bA_k^6$$
Thus, $\pi_3^{-1}(O)$ obtains the minimum possible dimension  for a weak deformation. 
\end{example}

Considering these examples, we define
\begin{equation}
\Phi_n := \frac{\dim \ \pi^{-1}_{Z_n}(X_{\sing})}{\ell(Z_n)} - d
\end{equation}
 for any weak deformation $Y_n$ of a reduced scheme $X$ over a fat point $Z_n$. We expect that  the limit of  $\Phi_n$ to exist for any sequence of weak deformations $Y_n$ over fat points $Z_n$ given by a closed germ $(Z,P)$, and we expect this limit to fit into the inequality 
 \begin{equation}
 e(Z,P) \leq \lim_{n\to \infty} \Phi_n \leq \delta^{*}_{(Z,P)}(X)
 \end{equation}
 where $e(Z,P)$ and $\delta^{*}_{(Z,P)}(X)$ are the asymptotic defects defined in Section \ref{sec2}.

\section{Motivic volumes of auto-arc spaces}

From now on we restrict our attention to the case where $S = \spec{k}$ with $k$ a fixed algebraically closed field. We let $\var$ denote the category of varieties over $k$. We fix a $k$-scheme $Z$ and a closed point  $p$ on $Z$. Therefore, we have the $n$th order jets $Z_n:=J_p^nZ := \spec{\sO_{Z,p}/\mathfrak{m}_p^{n+1}}$ as a fixed sequenced of infinitesimal neighborhoods of $p$ on $Z$.

Let us also fix an arbitrary  sequence of infinitesimal deformations $Y_n$ of $X$ over $Z_n$ such that $Y_{n-1} \cong Y_n \times_{Z_n} Z_{n-1}$ for all $n \geq 1$, we may consider the sequence of auto-arc spaces $\sA_n := \sA_{Z_n}(Y_n)$ together with the natural induced map 
$\pi^{n}_{m} : \sA_n \to \sA_{m}$ for $n\geq m \geq 0$.  Define $\sA := \varprojlim \sA_n$ and let $\alpha_n $ denote the canonical morphism from $\sA \to \sA_n$

Let $\grot{\var}$ denote the Grothendieck ring of varieties, $\sG := \grot{\var}[\bL^{-1}]$ the localized Grothendieck ring by the Leftschetz motive, and let $\widehat{\sG}$ be the completion of $\sG$ along the dimensional filtration. We define 
\begin{equation}
\mu(X,\sY) := \lim_{n\to \infty} [\sA_n]\bL^{-d\ell-\delta_n}
\end{equation}
provided the limit in $\widehat{\sG}$ exists. If we assume $X$ is locally a complete intersection over $k$, we may decompose each term in the limit as $[\pi_n^{-1}(X_{\sing})]\bL^{-d\ell-\delta_n} + M$  where $M$ is a fixed element (i.e., not dependent on $n$). We noted then that $[\pi_n^{-1}(X_{\sing})]\bL^{-d\ell-\delta_n} \rightarrow 0$, and so we may consider the motivic measure as $M$. 

\newpage

\bibliographystyle{amsrefs}

\nocite{*}

\bibliography{FlatMap.bib}

\end{document}